\newcolumntype{L}{>{$}l<{$}}
\newcolumntype{R}{>{$}r<{$}}
\newtheorem{thm}{Theorem}[section]
\newtheorem{lemma}[thm]{Lemma}
\newtheorem{cor}[thm]{Corollary}
\newtheorem{prop}[thm]{Proposition}
\theoremstyle{remark}
\newtheorem*{remark}{Remark}
\theoremstyle{definition}
\newtheorem{question}[thm]{Question}
\newtheorem{example}[thm]{Example}
\def\ZZ{\mathbb{Z}}
\def\QQ{\mathbb{Q}}
\def\PP{\mathbb{P}}
\def\RR{\mathbb{R}}
\def\CC{\mathbb{C}}
\def\wt{\widetilde}
\def\preper{\mathrm{PrePer}}
\def\ve{\varepsilon}
\def\I{\mathcal{I}}
\def\vol{\mathrm{Vol}}
\def\multi#1#2{\ensuremath{\left(\kern-.3em\left(\genfrac{}{}{0pt}{}{#1}{#2}\right)\kern-.3em\right)}}
\numberwithin{equation}{section}
\begin{document}

\title{Polynomials with many rational preperiodic points}

\author{John R. Doyle}
\address{Dept. of Mathematics\\
Oklahoma State University\\
Stillwater, OK 74078}
\email{john.r.doyle@okstate.edu}

\author{Trevor Hyde}
\address{Dept. of Mathematics\\
University of Chicago \\
Chicago, IL 60637}
\email{tghyde@uchicago.edu}

\begin{abstract}
In this paper we study two questions related to exceptional behavior of preperiodic points of polynomials in $\QQ[x]$. 
We show that for all $d\geq 2$, there exists a polynomial $f_d(x) \in \QQ[x]$ with $2\leq \deg(f_d) \leq d$ such that $f_d(x)$ has at least $d + \lfloor \log_2(d)\rfloor$ rational preperiodic points. Furthermore, we show that for infinitely many integers $d$, the polynomials $f_d(x)$ and $f_d(x) + 1$ have at least $d^2 + d\lfloor \log_2(d)\rfloor - 2d + 1$ common complex preperiodic points.
\end{abstract}
\maketitle

\section{Introduction}
Let $K$ be a field and let $f(x) \in K[x]$ be a polynomial. We write $f^n(x)$ to denote the $n$-fold composition of $f$ with itself. A point $\alpha \in \overline{K}$ is \emph{preperiodic} under $f(x)$ if the orbit $\{f^n(\alpha) : n\geq 0\}$ is finite. 
Let $\preper(f,K)$ denote the set of $K$-rational preperiodic points of $f$,
\[
    \preper(f,K) := \{\alpha \in K : \alpha \text{ is preperiodic under } f\}.
\]

The following questions arise naturally in arithmetic and complex dynamics:
\begin{question}
How many rational preperiodic points can a degree-$d$ polynomial $f(x) \in \QQ[x]$ have?
\end{question}

\begin{question}
How many complex preperiodic points can degree-$d$ polynomials $f(x), g(x) \in \CC[x]$ have in common?
\end{question}

Both questions are conjectured to have answers in the form of uniform upper bounds depending only on $d$ (subject to some minor caveats described below). 
Our main result proves the existence of a sequence of polynomials $f_d(x) \in \QQ[x]$ of degree at most $d$ which simultaneously exhibit extremal behavior for both questions: $f_d(x)$ has many rational preperiodic points, and the polynomials $f_d(x) + i $ and $f_d(x) + j$ have many common complex preperiodic points for small integers $i$ and $j$.

\begin{thm}
\label{thm main intro}
For all integers $d\geq 2$ there exists a polynomial $f_d(x) \in \QQ[x]$ such that $2\leq \deg(f) \leq d$ and
\begin{enumerate}
    \item $f_d(x)$ has at least $d + \lfloor \log_2(d)\rfloor$ rational preperiodic points,
    
    \item for all $0 \le i < j \le \log_2(d)$,
    \[
        \Big|\preper(f_{d}(x)+i,\CC) \cap \preper(f_{d}(x)+j,\CC)\Big| < \infty,
    \]

    \item and
    \[
        \left|\bigcap_{i=0}^{\lfloor \log_2(d)\rfloor} \preper(f_{d}(x) + i,\CC)
        \right| \ge \deg(f_{d})(d - 1) + 1.
    \]
\end{enumerate}
\end{thm}

\begin{remark}
Using Lagrange interpolation one may easily construct degree-$d$ polynomials with $d + 1$ rational preperiodic points. Each rational preperiodic point beyond $d + 1$ imposes an additional constraint. Theorem \ref{thm main intro} shows that it is possible to get an improvement on the order of (at least) $\log(d)$ on the Lagrange interpolation construction.
\end{remark}

Given an integer $d\geq 2$, let
\begin{align*}
    B_d &:= \sup_f |\preper(f,\QQ)|\in [0,\infty],\\
    C_d &:= \sup_{f,g}|\preper(f,\CC) \cap \preper(g,\CC)|\in [0,\infty],
\end{align*}
where the supremum defining $B_d$ is taken over all polynomials $f(x) \in \QQ[x]$ with $2 \leq \deg(f) \leq d$, and the supremum defining $C_d$ is taken over all $f(x), g(x) \in \CC[x]$ with $2 \leq \deg(f), \deg(g) \leq d$ such that $\preper(f,\CC) \neq \preper(g,\CC)$. Both $B_d$ and $C_d$ are conjectured to be finite for all $d\geq 2$.

Northcott \cite{northcott} proved that if $\deg(f)\geq 2$, then $\preper(f,\QQ)$ is finite.
The Morton-Silverman Uniform Boundedness Conjecture \cite[p. 100]{MS} asserts, in part, that $B_d < \infty$.  
This conjecture has motivated a substantial volume of work in arithmetic dynamics (see Silverman \cite[Sec. 3.3]{ADS}). While it is widely believed to be true, the Uniform Boundedness Conjecture has yet to be proved unconditionally in any degree. 
Looper \cite{looper} recently gave a conditional proof that $B_d < \infty$, assuming a generalization of the $abc$ conjecture. 

DeMarco, Krieger, and Ye \cite[Conj. 1.4]{DKY} conjecture that $C_d < \infty$ for all $d \geq 2$; they prove this conjecture when $f$ and $g$ are restricted to the family of quadratic polynomials of the form $x^2 + c$ \cite[Thm. 1.1]{DKY}. 
Mavraki and Schmidt \cite{MSch} recently proved an analogous uniform bound on the number of common preperiodic points along 1-parameter families in $\mathrm{Rat}_d \times \mathrm{Rat}_d$, where $\mathrm{Rat}_d$ denotes the space of degree-$d$ rational functions.

The polynomials asserted to exist in Theorem \ref{thm main intro} combined with an explicit family described below in Theorem \ref{thm d+6 intro} lead to the following lower bounds on $B_d$ and $C_d$.

\begin{cor}
\label{cor lower bounds}
For all integers $d\geq 2$,
\begin{enumerate}
    \item $B_d \geq d + \max(6,\lfloor \log_2(d)\rfloor)$,
    \item $C_d \geq d^2 + 4d + 1$.
\end{enumerate}
Furthermore, there are infinitely many $d\geq 2$ for which
\[
    C_d \ge d^2 + d\lfloor \log_2(d/4)\rfloor + 1.
\]
\end{cor}

One may compare Corollary~\ref{cor lower bounds}(1) to known lower bounds on $A_g := \sup_X |X(\QQ)|$, where $X$ ranges over all smooth irreducible genus-$g$ curves defined over $\QQ$. In this setting, the best known lower bound for $A_g$ that holds for all $g \ge 2$ is linear in $g$ (see \cite{CHM}), though it is unknown whether the correct upper bound should also be linear. In that spirit, we pose the following question:

\begin{question}
\label{quest lower bound}
What is the order of growth of $B_d$ as $d \to \infty$?
Is it true that 
\[
    B_d = d + O(\log(d))?
\]
\end{question}

\begin{remark}
Our proof of Theorem~\ref{thm main intro}(3) (hence also Corollary~\ref{cor lower bounds}(2)) actually shows something stronger: Given a set $\mathcal P$ of polynomials, we say that a finite set $S \subseteq \CC$ has a \emph{finite orbit} under $\mathcal P$ if $f(S) \subseteq S$ for every $f\in \mathcal P$. (See \cite{REU} for a detailed study of finite orbits for pairs of quadratic and cubic polynomials.) Note that if $S$ has a finite orbit under $\mathcal P$, then $S \subseteq \bigcap_{f \in \mathcal P} \preper(f,\CC)$, but, in general, common preperiodic points of the elements of $\mathcal{P}$ need not have a finite orbit under $\mathcal{P}$.

With this setup, we prove that for the polynomials $f_{d}(x) \in \QQ[x]$ provided by Theorem~\ref{thm main intro}, the set of maps $\mathcal P := \{f_{d}(x) + i : 0 \le i \le \lfloor\log_2(d)\rfloor\}$ has a finite orbit with at least $\deg(f_{d})(d - 1) + 1$ elements.
As a result, it follows that Corollary \ref{cor lower bounds}(2) holds when $C_d$ is replaced with
\[
    \widetilde{C}_d := \sup_{f,g} \sup_S |S|
    \le C_d,
\]
where $f$ and $g$ range over all polynomials of degree $2 \le \deg(f),\ \deg(g) \le d$ such that $\preper(f,\CC) \ne \preper(g,\CC)$ and $S$ ranges over all finite orbits of $\mathcal{P} = \{f,g\}$.
\end{remark}

\begin{remark}
The two uniform boundedness conjectures stated above for polynomials are believed to hold more generally for rational functions on $\PP^1$.
However, the methods of this paper appear to be constrained to polynomials.
Note that if $B_d'$ and $C_d'$ are 
defined analogously to $B_d$ and $C_d$, but with rational functions instead of polynomials, then we have $B_d < B_d'$ and $C_d < C_d'$ 
for all $d \ge 2$. This inequality follows from the simple observation that every polynomial is a rational function, plus the fact that we are not counting $\infty$ as a preperiodic point, though it is a fixed point for every polynomial map when considered as an endomorphism of $\PP^1$.
\end{remark}

\begin{remark}
Fu and Stoll \cite{FT} recently proved a result analogous to Corollary \ref{cor lower bounds}(2), giving lower bounds on the maximal number of common torsion $x$-coordinates for pairs of elliptic curves $E_1, E_2$ such that $x(E_{1,\mathrm{tors}}) \neq x(E_{2,\mathrm{tors}})$.
Their results have the following dynamical interpretation: If $f_i(x)$ denotes the degree-4 flexible Latt\`es map associated to multiplication by $2$ on the elliptic curve $E_i$, then $x(E_{i,\mathrm{tors}}) = \preper(f_i(x),\CC)$. Hence \cite[Thm. 2]{FT} implies that there are infinitely many pairs of elliptic curves $E_1$, $E_2$ for which 
\[
    22 \leq |\preper(f_1(x),\CC) \cap \preper(f_2(x),\CC))| < \infty,
\]
and \cite[Thm. 3]{FT} implies that there exists an explicit pair of elliptic curves $E_1$, $E_2$ such that 
\begin{equation}
\label{eqn fu_stoll}
    |\preper(f_1(x),\CC) \cap \preper(f_2(x),\CC))| = 34.
\end{equation}
Using the notation of the previous remark, \eqref{eqn fu_stoll} implies that $C_4' \geq 34$. On the other hand, in Section \ref{sec low degree} we provide an example that shows that $C_4 \geq 36$, hence that $C_4' \geq 37$.
\end{remark}

\begin{remark}
Despite the considerable interest in proving $B_d < \infty$, we are unaware of any previous work explicitly proving nontrivial lower bounds on $B_d$ outside of finitely many low degree cases. However, motivated by problems in complexity theory, Cohen, Shpilka, and Tal prove a result (\cite[Thm. 1.5]{cohen}; see also \cite[p. 458]{cohen}) that implies the following: For all $0 < \ve < 1$, there exists $d_\ve$ such that for all $d \ge d_\ve$, we have
\[
    B_d \geq d + \lfloor \ve \log_2(d) \rfloor.
\]
Our improvement on this lower bound in Corollary \ref{cor lower bounds}(1) stems from an exact evaluation of a certain lattice discriminant (see Theorem \ref{thm lattice disc}) that was only bounded in \cite{cohen}.
We thank Yan Sheng Ang for bringing \cite{cohen} to our attention.
\end{remark}

\subsection{Dynamical compression}

For a positive integer $m$, let $[m] := \{1,2,3,\ldots, m\}$.
We say a degree-$d$ polynomial $g(x) \in \CC[x]$ exhibits \emph{dynamical compression} if $g(x)$ is conjugate to some polynomial $f(x)$---that is, $f = \ell \circ g \circ \ell^{-1}$ for some linear polynomial $\ell(x) \in \CC[x]$---which
satisfies
\begin{equation*}
    f([m]) \subseteq [n]
\end{equation*}
for some $m \geq n > d + 1$.
In this case, $[m] \subseteq \preper(f(x),\QQ)$.
The polynomials $f_d(x)$ asserted to exist in Theorem \ref{thm main intro} all exhibit dynamical compression.

\begin{example}
\label{ex quadratic}
Let $f(x) := \frac{x^2 - 9x + 22}{2}$. One may check that
\begin{equation}
\label{eqn quadratic compression}
    f([8]) \subseteq [7].
\end{equation}
Therefore, both $f(x)$ and $f(x) + 1$ exhibit dynamical compression and have at least 8 rational preperiodic points, namely the elements of $[8]$.
In fact, it may be shown that $f(x)$ and $f(x) + 1$ have exactly 8 rational preperiodic points.
Hence $B_2 \geq 8$; in fact,
Poonen \cite{poonen} has conjectured that $B_2 = 8$.
The polynomials $f(x)$ and $f(x) + 1$ are simultaneously conjugate to $x^2 - \frac{29}{16}$ and $x^2 - \frac{21}{16}$, respectively.
These quadratic polynomials appear several times in the literature for their exceptional properties, including in DeMarco, Krieger, and Ye \cite{DKY}, Doyle, Faber and Krumm \cite{DFK}, Hindes \cite{hindes}, Morton and Raianu \cite{MR}, and Poonen \cite{poonen}.

\end{example}

\begin{example}
The cubic polynomial $f(x) := \frac{x^3 - 18x^2 + 89x - 66}{6}$ satisfies $f([11]) \subseteq [11]$.
Thus both $f(x)$ and $12 - f(x)$ exhibit dynamical compression and have at least 11 rational preperiodic points.
These examples share the current record with 8 other cubics, found by computational search in Benedetto et al. \cite[Table 2]{benedetto}, for the cubic polynomial with the most rational preperiodic points.
Of the 10 record-holding cubics found by Benedetto et al., only the two conjugate to $f(x)$ and $12 - f(x)$ exhibit dynamical compression.
\end{example}

The following proposition, proved in Section \ref{sec common}, makes explicit the connection between dynamical compression and polynomials with many common preperiodic points.

\begin{prop}
\label{prop common preper intro}
Suppose that $f(x) \in \CC[x]$ is a degree $d\geq 2$ polynomial such that
\[
    f([m]) \subseteq [n],
\]
for some integers $m > n \geq 1$. Then
\[
    d(n - 1) + 1 \leq \Big|\bigcap_{i = 0}^{m-n} \preper(f(x) + i,\CC)\Big| < \infty.
\]
\end{prop}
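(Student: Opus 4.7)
The plan is to exhibit an explicit large subset of $\bigcap_{i=0}^{m-n}\preper(f(x)+i,\CC)$, namely the preimage $f^{-1}([n])$, bound its size using Riemann--Hurwitz, and obtain finiteness from equidistribution results in complex dynamics. First I would verify that $[m]$ is forward-invariant under every translate $f(x)+i$ for $0 \le i \le m-n$: if $x \in [m]$ then $(f+i)(x) = f(x)+i \in [n] + i \subseteq [m]$, so $[m] \subseteq \preper(f+i,\CC)$ for each such $i$. Next I would show $f^{-1}([n])$ is contained in every $\preper(f+i,\CC)$: if $\beta \in f^{-1}([n])$ and $\alpha := f(\beta) \in [n]$, then $(f+i)(\beta) = \alpha + i \in [n+i] \subseteq [m]$, which is preperiodic for $f+i$ by the previous step, and hence $\beta$ is as well.

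To bound $|f^{-1}([n])|$ from below I would apply the Riemann--Hurwitz formula to the degree-$d$ cover $f:\PP^1_\CC \to \PP^1_\CC$. The total ramification is $2d-2$, and the pole at $\infty$ alone contributes $d-1$, so
\[
\sum_{k \in \CC}\bigl(d-|f^{-1}(k)|\bigr) = d-1.
\]
Restricting this sum to $k \in [n]$ and using that the fibers $f^{-1}(k)$ for distinct $k$ are disjoint yields $|f^{-1}([n])| = \sum_{k \in [n]} |f^{-1}(k)| \geq nd - (d-1) = d(n-1)+1$, giving the desired lower bound.

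The main obstacle is the finiteness assertion. Since $m-n \geq 1$, the intersection lies inside $\preper(f,\CC) \cap \preper(f+1,\CC)$, so it suffices to show this pairwise intersection is finite. By the equidistribution theorem of Baker--DeMarco, an infinite common preperiodic set would force the measures of maximal entropy of $f$ and $f+1$ to agree, and in particular their filled Julia sets to coincide: $K(f) = K(f+1)$. After conjugating $f$ to a monic polynomial $\tilde f$ by a linear $\ell(z) = az + b$ (which carries $f+1$ to $\tilde f + a$ with $a \neq 0$), the B\"ottcher coordinate $\phi$ of the common filled Julia set is uniquely determined by the normalization $\phi(z)/z \to 1$ at infinity, and the functional equation $\phi(g(z)) = \phi(z)^d$ holding for both $g = \tilde f$ and $g = \tilde f + a$ forces $\tilde f = \tilde f + a$ on $\CC \setminus K(\tilde f)$, a contradiction.
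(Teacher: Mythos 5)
Your proposal is essentially correct. For the containment $f^{-1}([n]) \subseteq \bigcap_{i=0}^{m-n}\preper(f+i,\CC)$ and the cardinality bound $|f^{-1}([n])| \ge d(n-1)+1$, your arguments match the paper's: the paper packages the forward-invariance observation slightly differently (noting $(f+i)(f^{-1}([n])) \subseteq [n+i] \subseteq [m] \subseteq f^{-1}([n])$ directly), and it phrases the fiber-counting as a standalone lemma about ramification of polynomials, but this is the same Riemann--Hurwitz counting you do, with the pole at $\infty$ of order $d$ absorbing $d-1$ of the $2d-2$ total ramification.

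Where you diverge is the finiteness claim, and here the paper's route is shorter. Both proofs invoke Baker--DeMarco \cite[Thm.\ 1.2]{BD}, but they use different parts of the equivalence it establishes. You use the implication ``infinite common preperiodic set $\Rightarrow$ equal equilibrium measures,'' reduce to $K(f) = K(f+1)$, and then rule this out by a B\"ottcher-coordinate argument after passing to monic conjugates. Your B\"ottcher argument does go through: after making both polynomials monic, the Green's function near $\infty$ is determined by the common filled Julia set, so $\log(\phi(z)/z)$ is the holomorphic completion of $G_K(z)-\log|z|$ pinned down by the vanishing condition at $\infty$, and the two functional equations then force the translation constant to be zero. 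The paper instead directly verifies that $\preper(f,\CC) \neq \preper(f+i,\CC)$ for $i \ge 1$ by the elementary observation that $\preper(f,\CC)$ is bounded (because $\infty$ is super-attracting), so some $q \in \preper(f,\CC)$ has $q+i \notin \preper(f,\CC)$; taking $p \in f^{-1}(q)$ then yields an explicit witness to the inequality of preperiodic sets in either case. With that inequality in hand, Baker--DeMarco gives finiteness immediately. Your approach is valid but requires more complex-analytic machinery to re-derive a distinction (between $K(f)$ and $K(f+1)$) that the paper sidesteps by working directly with the preperiodic sets.
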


\begin{example}
\label{ex quadratic 2}
Returning to Example \ref{ex quadratic}, let $f(x) := \frac{x^2 - 9x + 22}{2}$. Then \eqref{eqn quadratic compression} and Proposition \ref{prop common preper intro} imply that 
\[
    |\preper(f(x),\CC) \cap \preper(f(x) + 1,\CC)| \geq 13.
\]
However, by comparing the preperiodic points with small forward orbit for $f(x)$ and $f(x) + 1$ directly, we find that $\preper(f(x),\CC) \cap \preper(f(x) + 1,\CC)$ actually contains at least 26 points.
Hence $C_2 \geq 26$.
That is, dynamical compression accounts for half of the known preperiodic points shared by $f(x)$ and $f(x) + 1$.
To the best of our knowledge, this is the current record for a lower bound on $C_2$. See Table \ref{table cd bounds} in Section \ref{sec low degree} for more lower bound records on $C_d$ for $2\leq d \leq 15$.
\end{example}

The proof of Theorem \ref{thm main intro} uses a geometry of numbers approach to show the existence of polynomials which compress exceptionally large intervals of integers but does not produce explicit examples.
Thus it remains an interesting problem to construct explicit polynomials which surpass the trivial lower bounds on $B_d$ and $C_d$. 
Our last result provides one such family $r_d(x)$.
Formulas for $r_d(x)$ are given in Section \ref{sec d + 6}.

\begin{thm}
\label{thm d+6 intro}
For all $d\geq 2$, there is an explicit degree-$d$ polynomial $r_d(x)$ such that
\[
    r_d([d+6]) \subseteq \begin{cases} [d+5] & \text{if $d$ is even,}\\ [d+4] & \text{if $d$ is odd.}\end{cases}
\]
\end{thm}

\subsection{Acknowledgements}
We thank Laura DeMarco for a helpful correspondence which prompted this work.
We thank Hang Fu for pointing out that \eqref{eqn fu_stoll} follows from \cite[Thm. 3]{FT}. We thank Joseph Silverman for several helpful comments on an earlier draft, as well as for suggesting that we include the discussion and question immediately following Corollary~\ref{cor lower bounds}.
We thank Yan Sheng Ang for bringing the article \cite{cohen} to our attention.
And we thank the University of Chicago Research Computing Center for access to their high performance computing cluster.
John Doyle was partially supported by NSF grant DMS-2112697.
Trevor Hyde was partially supported by the NSF Postdoctoral Research Fellowship DMS-2002176 and the Jump Trading Mathlab Research Fund.

\section{Rational Preperiodic Points}
\label{sec rational preper}

The goal of this section is to prove Theorem \ref{thm main intro}, which is stated in a refined form below as Theorem \ref{thm main}.
Our strategy is to consider the lattice $\Lambda_{d,e}$ generated by vectors of the form $(f(0), f(1), \ldots, f(d+e))$ where $f(x)$ is a degree-at-most-$d$ integer-valued polynomial and $e > 0$ is an integer. (Fecall that $g(x) \in \QQ[x]$ is said to be \emph{integer-valued} if $g(\ZZ) \subseteq \ZZ$.)
There is a natural bijection between vectors in $\Lambda_{d,e}$ contained within a small box near the origin and degree-at-most-$d$ polynomials exhibiting dynamical compression.
We use a classical geometry-of-numbers theorem of Minkowski to prove the existence of lattice points in this box by analyzing the discriminant of $\Lambda_{d,e}$.

\subsection{Lattices and their discriminants}
\label{sec lattices}
Let $m\geq 1$ be an integer. By a \emph{lattice} $\Lambda \subseteq \RR^m$ we mean a discrete free abelian subgroup of $\RR^m$. 
If $\Lambda$ is a rank $n$ lattice with basis $v_1, v_2, \ldots, v_n$, then we call the compact set $\{\sum_{i=1}^n c_i v_i : 0 \leq c_i \leq 1\}$ a \emph{fundamental domain} of $\Lambda$. 
The \emph{discriminant} of $\Lambda$, which we denote by $\delta(\Lambda)$ is the square of the $n$-dimensional volume of a fundamental domain of $\Lambda$. If $M$ is the matrix with rows $v_i$, then 
\[
    \delta(\Lambda) = \det(MM^T).
\]
Note that $\delta(\Lambda)$ is  independent of the choice of basis for $\Lambda$.

Given integers $d, e \geq 0$, let $\Lambda_{d,e}$ be the lattice in $\RR^{d+e+1}$ spanned by the $d+1$ vectors
\[
    u_i := \left(\binom{0}{i}, \binom{1}{i},\ldots, \binom{d+e}{i}\right) \in \ZZ^{d+e+1}
\]
for $0 \leq i \leq d$. (Note that $\binom{j}{i} = 0$ if $j < i$.)
The lattice $\Lambda_{d,e}$ has rank $d + 1$: Indeed, if $\sum_{i=0}^d a_iu_i = 0$, then the degree-at-most-$d$ polynomial $\sum_{i=0}^d a_i\binom{x}{i}$ vanishes at more than $d$ points (namely, the points $0,1,2,\ldots,d+e$), hence all of the coefficients must be zero.
We now provide an explicit formula for $\delta(\Lambda_{d,e})$.

\begin{thm}
\label{thm lattice disc}
The discriminant of $\Lambda_{d,e}$ is given by
\[
    \delta(\Lambda_{d,e})
    = \prod_{i=0}^d\prod_{j=1}^e \frac{d + i + j + 1}{i + j}.
\]
\end{thm}

\begin{proof}
Let $M_{d,e} = \big(\binom{j}{i}\big)$ be the $(d+1)\times(d+e+1)$ matrix with rows $u_i$. Thus,
\[
    \delta(\Lambda_{d,e}) = \det(M_{d,e} M_{d,e}^T).
\]
To evaluate $\delta(\Lambda_{d,e})$ we use the Lindstr\"om-Gessel-Viennot lemma \cite[Thm. 1]{GV} 
to interpret $\det(M_{d,e} M_{d,e}^T)$ as the number of plane partitions which fit inside a $(d+1)\times (d+1)\times e$ box. 
The number of such plane partitions is given by MacMahon's formula, which provides the desired product formula for $\delta(\Lambda_{d,e})$.

A \emph{plane partition} $\Pi$ is a finite, weakly increasing sequence of partitions $\lambda_1 \leq \lambda_2 \leq \cdots \leq \lambda_k$. 
More intuitively, a plane partition may be thought of us a finite set of boxes stacked in the corner of a room.
For example, the plane partition in Figure \ref{fig:plane partition tower} may be visualized as the stack of boxes in Figure \ref{fig:plane partition boxes}.

Given positive integers $r, s, t$, we say a plane partition $\Pi: \lambda_1 \leq \lambda_2 \leq \cdots \leq \lambda_k$ \emph{fits inside the $r\times s \times t$ box} if $k\leq t$ and if the Young diagram of $\lambda_k$ fits inside an $r\times s$ box. Equivalently, the box diagram of $\Pi$ fits inside an $r\times s \times t$ box. Let $N(r,s,t)$ denote the number of plane partitions which fit inside an $r\times s \times t$ box.
We claim that $\delta(\Lambda_{d,e}) = N(d+1,d+1,e)$.

Let $\I$ be the $\ZZ$-lattice in $\RR^2$ spanned by the vectors $v_1 := (-\sqrt{3},1)$ and $v_2 := (\sqrt{3},1)$. Let $v_3 := v_1 + v_2 = (0,2) \in \I$. Given positive integers $a, b, c$, let $H(a,b,c)$ denote the convex hull of the six points 
\[
    \{0, av_1, bv_2, av_1 + cv_3, bv_2 + cv_3, av_1 + bv_2 + cv_3\} \subseteq \I.
\]
See Figure \ref{fig:H} for an illustration.
There is a simple and well-known correspondence between plane partitions that fit inside a box of dimensions $a\times b\times c$ and rhombic tilings of $H(a,b,c)$ (see Figure \ref{fig:rhombic tiling}).
This correspondence comes from the interpretation of a plane partition as a stack of cubical blocks in a the first quadrant of $\RR^3$ and viewing this stack of blocks from along the ray spanned by $(1,1,1)$.

\begin{figure}[H]
    \centering
    \includegraphics[scale=.13]{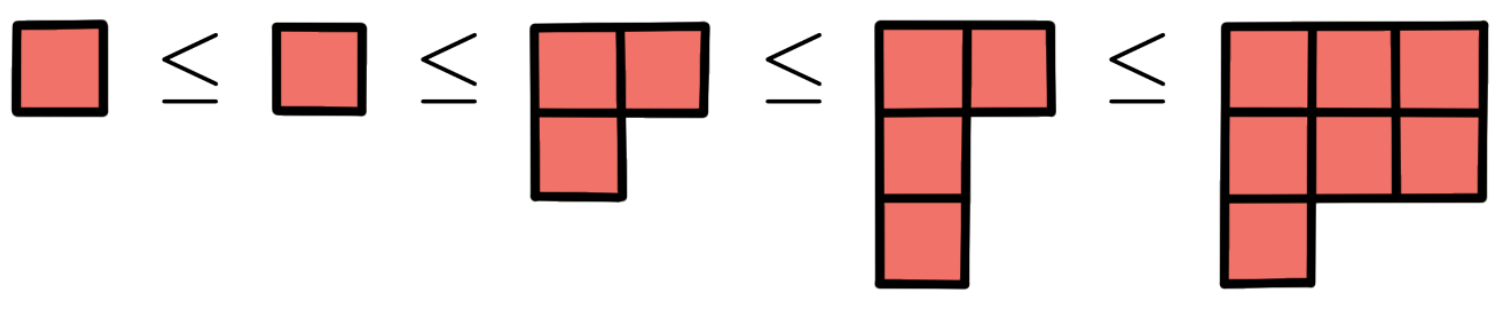}
    \caption{Example of a plane partition, illustrated using Young diagrams.}
    \label{fig:plane partition tower}
\end{figure}
\begin{figure}
\centering
\begin{minipage}{.47\textwidth}
  \centering
  \includegraphics[scale=.13]{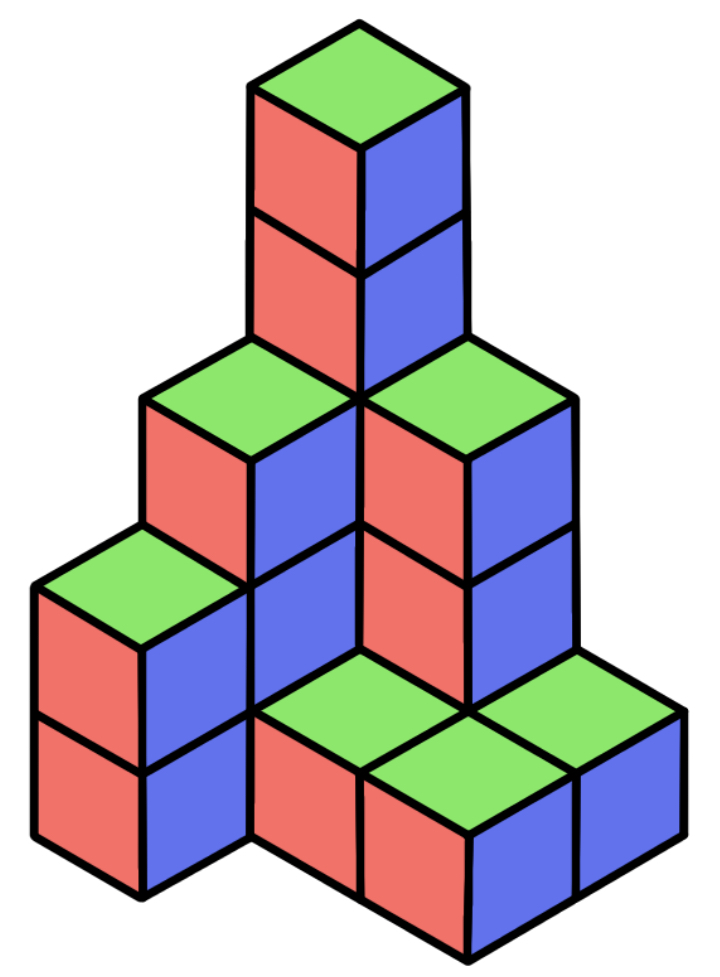}
  \captionof{figure}{Plane partition as stack of boxes}
  \label{fig:plane partition boxes}
\end{minipage}
\begin{minipage}{.47\textwidth}
  \centering
  \includegraphics[scale=.2]{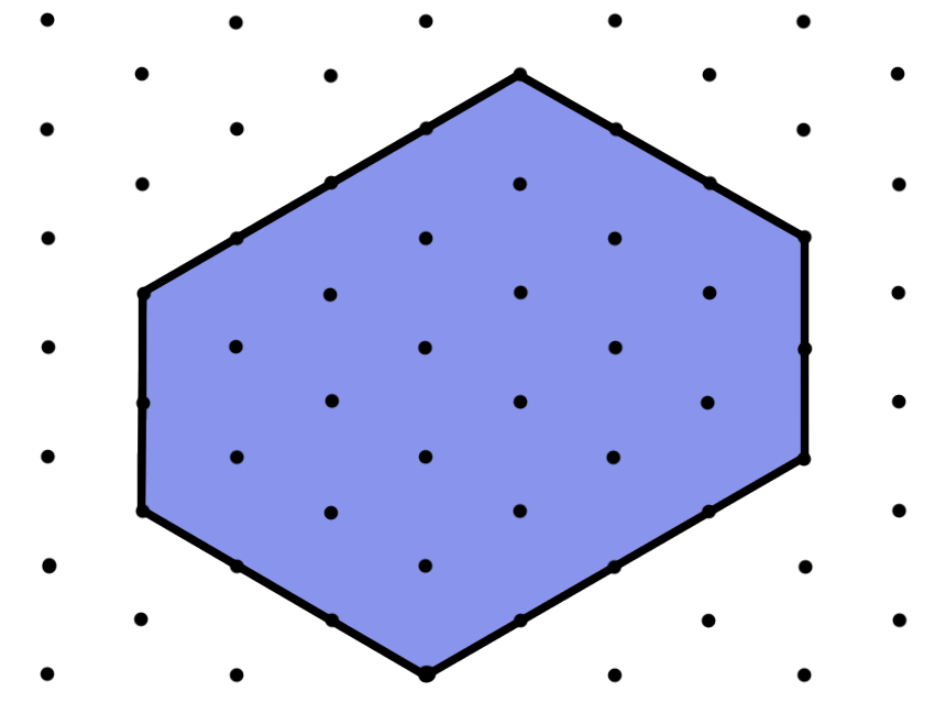}
  \captionof{figure}{$H(3,4,2)$ and a portion of the lattice $\I$.}
  \label{fig:H}
\end{minipage}
\end{figure}

Now consider a plane partition $\Pi$ contained in a box of size $(d+1)\times(d+1)\times e$ viewed as a rhombic tiling of $H(d+1,d+1,e)$.
There are three types of rhombic tiles $T_i$, characterized by which $v_i$ is parallel to the short diagonal of $T_i$; see Figure \ref{fig:tile types}. 

\begin{figure}[H]
\centering
\begin{minipage}{.47\textwidth}
  \centering
  \includegraphics[scale=.13]{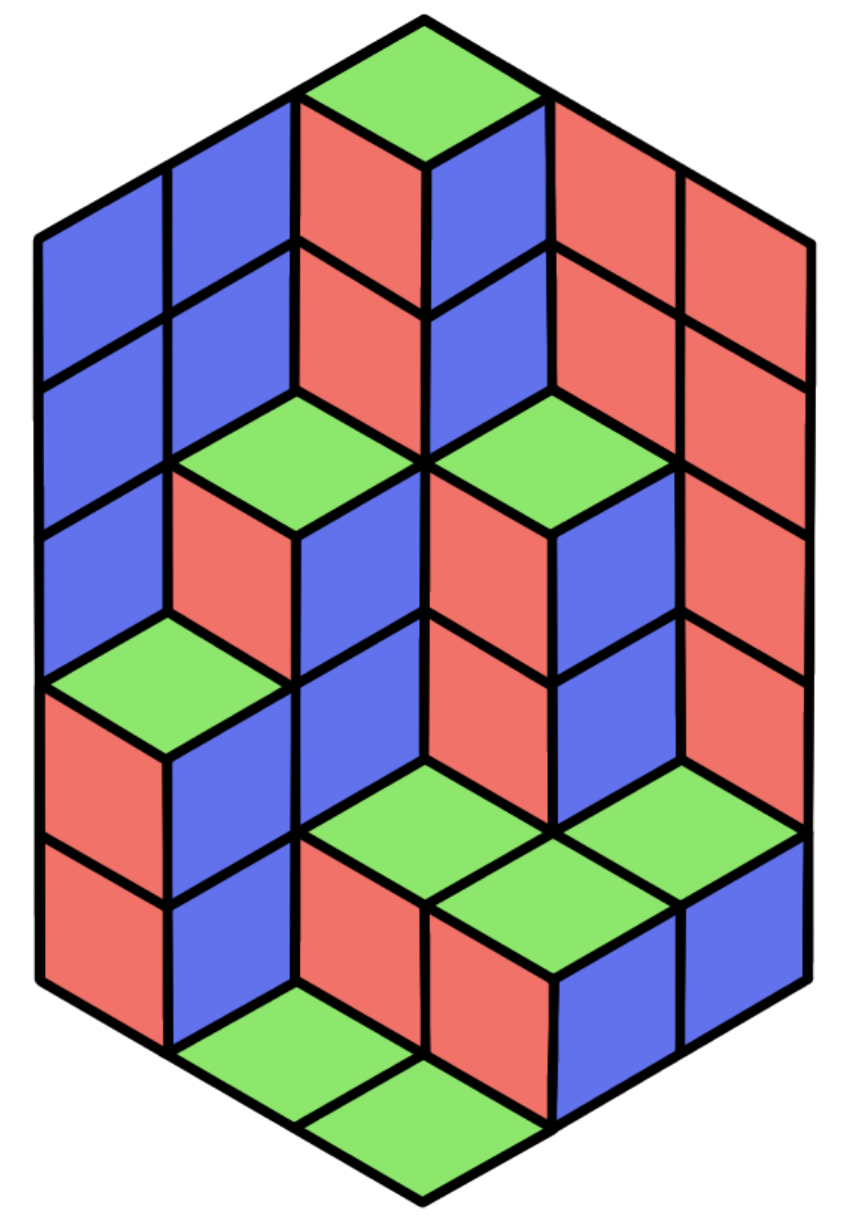}
  \captionof{figure}{Rhombic tiling of $H(3,3,5)$ corresponding to the plane partition from Figure \ref{fig:plane partition boxes}.}
  \label{fig:rhombic tiling}
\end{minipage}
\begin{minipage}{.47\textwidth}
  \centering
  \includegraphics[scale=.105]{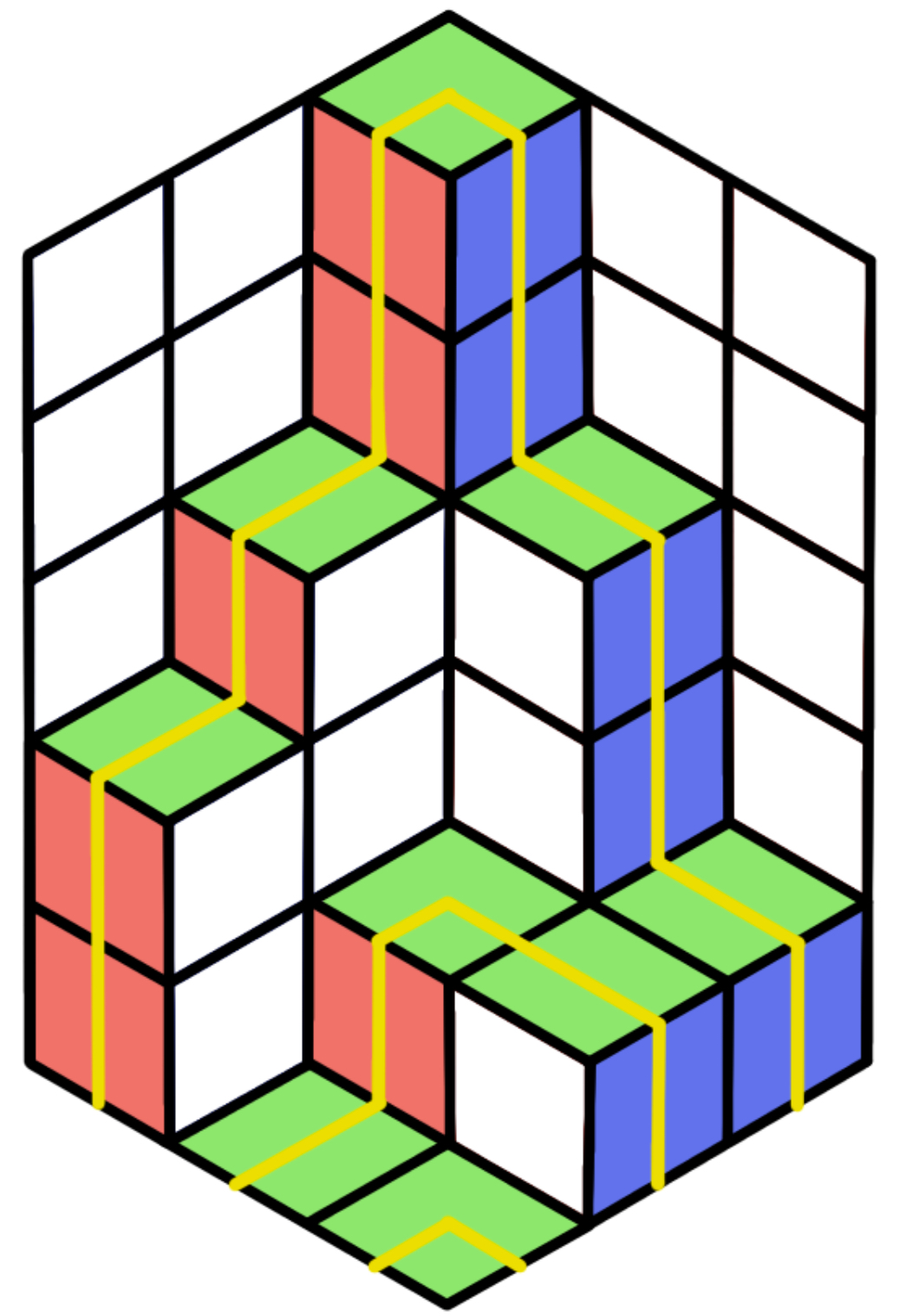}
  \captionof{figure}{Plane partition in $3\times 3\times 5$ box with corresponding rhombic paths highlighted.}
  \label{fig:plane partition to paths}
\end{minipage}
\end{figure}

For each $0\leq i \leq d$, let $e_i$ denote the line segment from $iv_1$ to $(i+1)v_1$ and let $f_i$ denote the line segment from $iv_2$ to $(i+1)v_2$. 
If $0\leq i, j \leq d$, then we define a \emph{rhombic path} from $e_i$ to $f_j$ to be a sequence of rhombic tiles starting at $e_i$, ending at $f_j$, such that to the left of the $v_3$-axis, each of the tiles is of type 
$T_2$ or $T_3$, and to the right of the $v_3$-axis each of the tiles is of type $T_1$ or $T_3$ (see Figure \ref{fig:rhombic path}).
Equivalently, to each $(d+1)\times (d+1)\times e$ box we may associate an acyclic directed graph $G_{d,e}$ with disjoint sets of vertices labelled $e_i$ and $f_j$ such that rhombic paths from $e_i$ to $f_j$ correspond to directed paths from $e_i$ to $f_j$ in $G_{d,e}$ (see Figure \ref{fig:digraph}.)

A plane partition $\Pi$ determines a sequence of non-crossing rhombic paths $P_i(\Pi)$ from $e_i$ to $f_i$.
Figure \ref{fig:plane partition to paths} illustrates the collection of rhombic paths associated to a plane partition contained in the $3\times 3 \times 5$ box.
Conversely, any sequence $P_0, P_1,\dots, P_d$ of non-intersecting rhombic paths $P_i$ from $e_i$ to $f_i$ determines a unique plane partition contained in a $(d+1)\times(d+1)\times e$ box.

\begin{figure}[H]
    \centering
    \includegraphics[scale=.12]{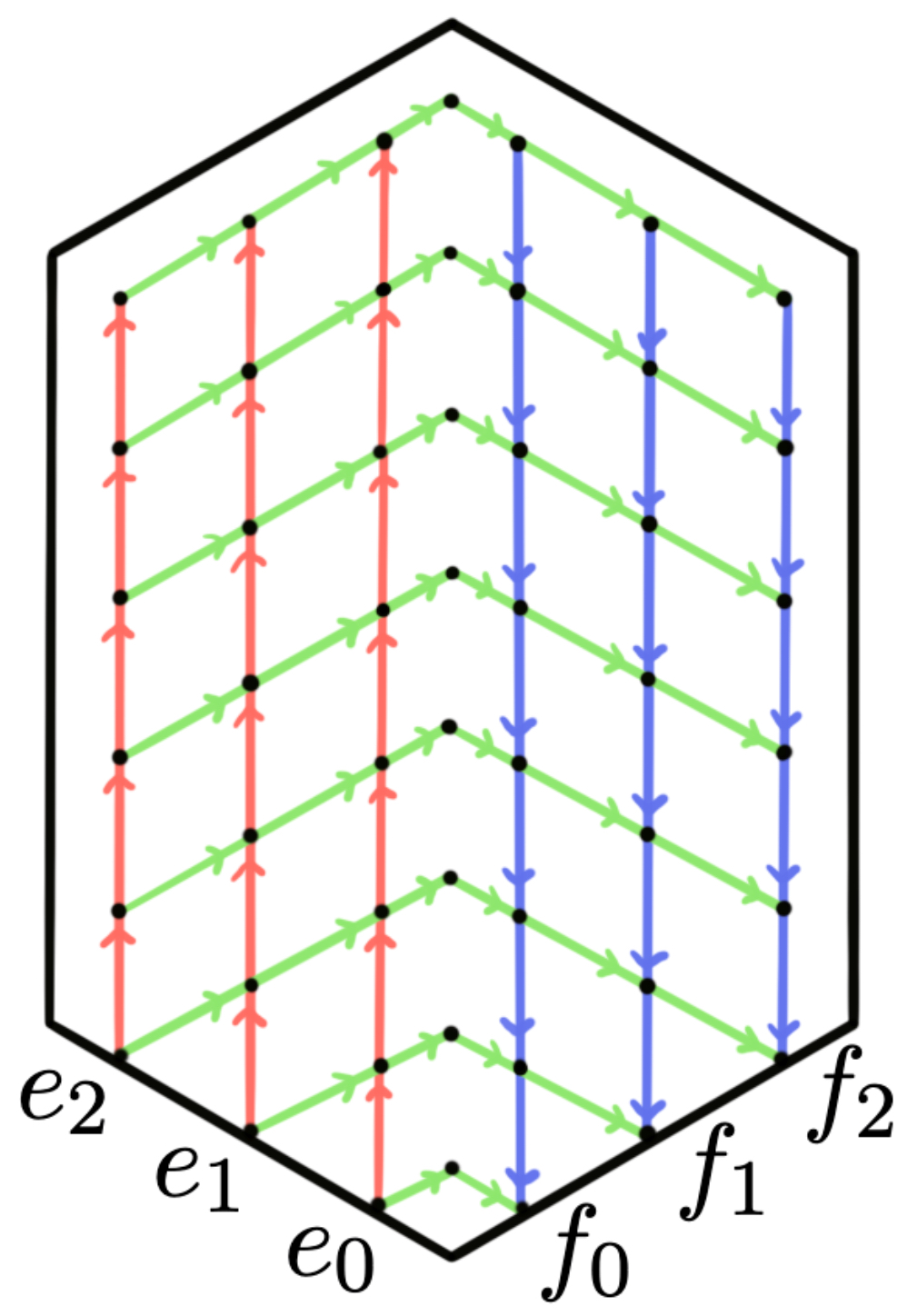}
    \caption{The acyclic digraph $G_{2,5}$.}
    \label{fig:digraph}
\end{figure}

Note that any path $P_i$ crosses the $v_3$-axis at a unique type $T_3$ tile $R_k$ 
with lowest point $kv_3$ for some $0\leq k \leq d + e$.
Every rhombic path from $e_i$ to $R_k$ consisting only of tiles of type $T_2$ and $T_3$ has length $k$ and contains exactly $i$ tiles of type $T_3$.
Hence there are $\binom{k}{i}$ such rhombic paths.
Therefore, the $ik$th entry of $M_{d,e}$ counts the number of rhombic paths from $e_i$ to $R_k$, and by symmetry it follows that the $ij$th entry of $M_{d,e}M_{d,e}^T$ counts the number of rhombic paths from $e_i$ to $f_j$.
Therefore the Lindstr\"om-Gessel-Viennot lemma \cite[Cor. 2]{GV} applied to the acyclic digraph $G_{d,e}$ implies that $\delta(\Lambda_{d,e}) = \det(M_{d,e}M_{d,e}^T) = N(d+1,d+1,e)$ is the total number of non-crossing rhombic paths, hence the total number of plane partitions which fit inside a box of dimension $(d+1)\times(d+1)\times e$. 
On the other hand, MacMahon's theorem \cite[p. 378]{stanley} implies that
\[
    N(d+1,d+1,e) = \prod_{i=0}^d\prod_{j=1}^e \frac{d + i + j + 1}{i + j}.\qedhere
\]

\begin{figure}[H]
\centering
\begin{minipage}{.47\textwidth}
  \centering
  \includegraphics[scale=.15]{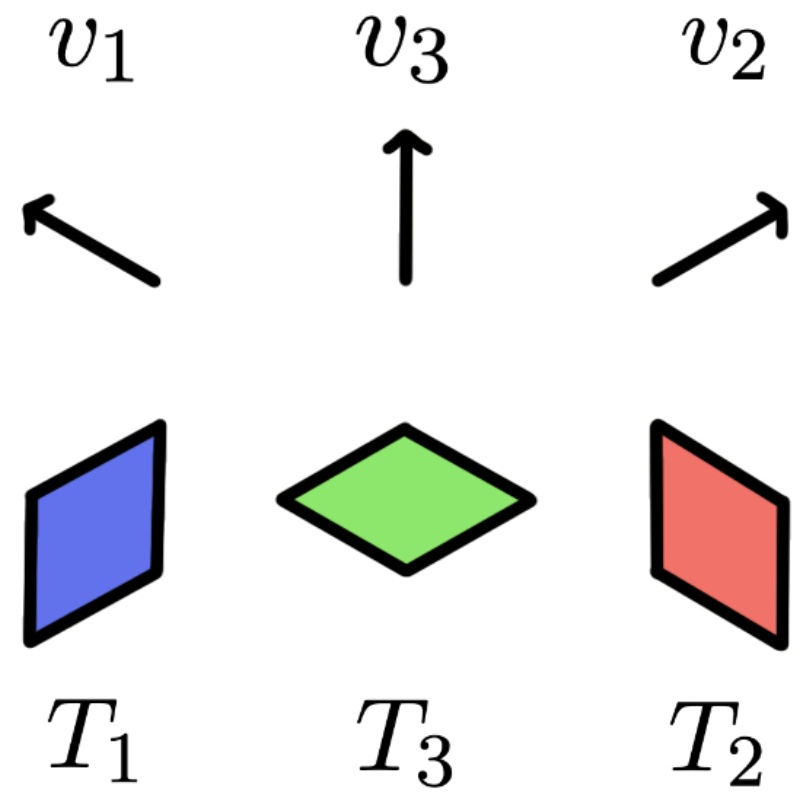}
  \captionof{figure}{Three rhombic tiles.}
  \label{fig:tile types}
\end{minipage}
\begin{minipage}{.47\textwidth}
  \centering
  \includegraphics[scale=.15]{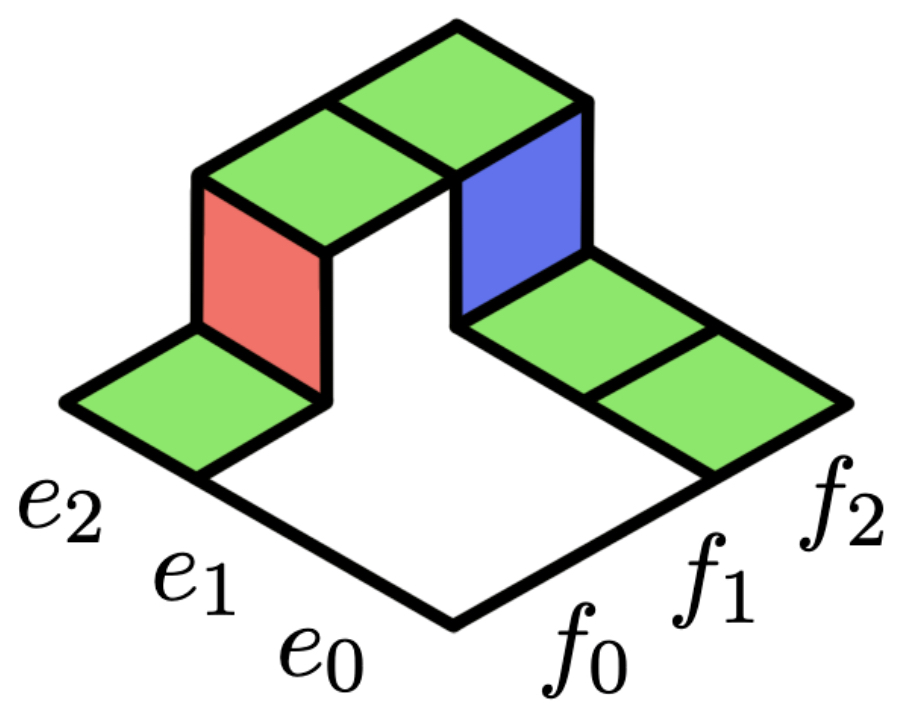}
  \captionof{figure}{Example of rhombic path from $e_2$ to $f_2$.}
  \label{fig:rhombic path}
\end{minipage}
\end{figure}
\end{proof}

\begin{remark}
The use of the Lindstr\"om-Gessel-Viennot lemma to count plane partitions and to evaluate determinants of matrices with binomial coefficient entries is not new; however, we did not find the evaluation of $\delta(\Lambda_{d,e})$ among the known results. 
Many variations on this idea may be found in the literature (see, for example, \cite{GV2, GV, stembridge}).
\end{remark}

The following corollary extracts an upper bound on $\delta(\Lambda_{d,e})$ from the product formula in Theorem \ref{thm lattice disc} that will be used in the proof of Theorem \ref{thm main}.

\begin{cor}
\label{cor disc bound}
Let $d, e$ be integers such that $d \ge 33$ and $1 \le e \le \log_2(d/2)$. Then
	\begin{equation}\label{eq:disc_bound}
		\log_2(\delta(\Lambda_{d,e})) < 2(d - 1)\log_2(d/2) - e\log_2(d + e + 1).
	\end{equation}
\end{cor}

\begin{proof}
Theorem~\ref{thm lattice disc} shows that $\delta(\Lambda_{d,e})$
is an increasing function of $e$.
Thus, it suffices to fix $e := \lfloor \log_2(d/2)\rfloor$ and show that
\begin{equation}\label{eq:new_goal}
    \log_2(\delta(\Lambda_{d,e})) < 2(d-1)e - e\log_2(d + e + 1).
\end{equation}
Taking the logarithm of the product formula
\[
 	\delta(\Lambda_{d,e}) = \prod_{i=0}^d\prod_{j=1}^e \frac{d+1+i+j}{i+j}
\]
yields
	\begin{align}
		\begin{split}
		\log_2(\delta(\Lambda_{d,e}))
			&= \sum_{i=0}^d\sum_{j=1}^e \log_2\left(\frac{d+1+i+j}{i+j}\right)
		\end{split}\notag\\
		\begin{split}
			&= \sum_{n=1}^{d+e} \#\{(i,j) : 0 \le i \le d,\ 1 \le j \le e,\ i + j = n\} \cdot \log_2\left(\frac{d+1+n}{n}\right)
		\end{split}\notag\\
		\begin{split}
			&=  
				\sum_{n=1}^e n\log_2\left(1 + \frac{d+1}{n}\right)
				+ \sum_{n=d+1}^{d+e} (d+e+1-n)\log_2\left(1 + \frac{d+1}{n}\right)\\
				&\qquad	+ e\sum_{n=e+1}^d \log_2\left(1 + \frac{d+1}{n}\right).
				\end{split}
			\label{eq:separate_sums}
	\end{align}
Note that $\log_2\left(1 + \frac{d+1}{n}\right)$ is a positive, decreasing function of $n$. Hence,
\[
    \sum_{n=1}^e n\log_2\left(1 + \frac{d+1}{n}\right) + \sum_{n=d+1}^{d+e} (d+e+1-n)\log_2\left(1 + \frac{d+1}{n}\right)
    \leq \binom{e+1}{2}(\log_2(d+2)+1).
\]
Note that for any $\ve > 0$ and for all sufficiently large $d$ (depending on $\ve$),
\begin{align*}
    \binom{e+1}{2}(\log_2(d+2) + 1) &= \binom{e+1}{2}\log_2(d) + \binom{e+1}{2}\log_2\Big(1 + \frac{2}{d}\Big) + \binom{e+1}{2}\\
    &< \Big(\frac{1}{2} + \ve\Big)e^2\log_2(d) + e^2.
\end{align*}
In particular, for if we take $\ve = .03$, then for all $d\geq 256$ we have
\begin{equation}
\label{eqn easy bound}
    \sum_{n=1}^e n\log_2\left(1 + \frac{d+1}{n}\right) + \sum_{n=d+1}^{d+e} (d+e+1-n)\log_2\left(1 + \frac{d+1}{n}\right)
    < .53e^2\log_2(d) + e^2.
\end{equation}

Interpreting the third sum in \eqref{eq:separate_sums} as a right hand Riemann sum gives us
\begin{align*}
    \sum_{n=e+1}^d \log_2\left(1 + \frac{d+1}{n}\right) &\leq \int_e^{d}\log_2\Big(1 + \frac{d+1}{x}\Big)\,dx\\
    &= (2d+1)\log_2(2d+1) - d\log_2(d) - (d+e+1)\log_2(d+e+1) + e\log_2(e).
\end{align*}
Observe that for $d\geq 2$,
\begin{align*}
    (2d+1)\log_2(2d+1) &= 2d + 1 + (2d+1)\log_2(d) + (2d+1)\log_2\Big(1+ \frac{1}{2d}\Big)\\
    &\leq 2d + (2d+1)\log_2(d) + 3,
\end{align*}
and
\begin{align*}
    (d+e+1)\log_2(d+e+1) &= (d+e+1)\log_2(d) + (d+e+1)\log_2\Big(1 + \frac{e+1}{d}\Big)\\
    &\geq (d+e+1)\log_2(d) + e + 1.
\end{align*}
Hence for $d\geq 2$, 
\begin{align}
    \sum_{n=e+1}^d \log_2\left(1 + \frac{d+1}{n}\right) &
    \leq 2d + (2d+1)\log_2(d) + 3 - d\log_2(d) \notag\\
    &
    \qquad- (d+e+1)\log_2(d) - e - 1 + e\log_2(e)\notag\\
    &=2d - e\log_2(d) + e\log_2(e) - e + 2.\label{eqn harder bound}
\end{align}
Combining the estimates \eqref{eqn easy bound} and \eqref{eqn harder bound} for $d\geq 256$ we have
\begin{align*}
    \log_2(\delta(\Lambda_{d,e})) &< (2de - e^2\log_2(d) + e^2\log_2(e) - e^2 + 2e) + .53e^2\log_2(d) + e^2\\
    &= 2de - .47e^2\log_2(d) + e^2\log_2(e) + 2e.
\end{align*}
It therefore suffices to prove that
\[
    2de - .47e^2\log_2(d) + e^2\log_2(e) + 2e \leq 2(d-1)e - e\log_2(d+e+1),
\]
which is equivalent to
\begin{equation}
\label{eqn final reduction}
    e^2\log_2(e) + e\log_2(d+e+1) + 4e \leq .47e^2\log_2(d),
\end{equation}
which can be shown to hold for all $d \ge 1079$.
One may then check by computation that the inequality \eqref{eq:disc_bound} holds for $33 \le d \le 1079$, completing the proof.
\end{proof}

\begin{remark}
Cohen, Shpilka, and Tal \cite[Lem. 6.4]{cohen} proved, in our notation, that
\[
    \log_2(\delta(\Lambda_{d,e})) \leq (2d+e + 1)e.
\]
If $e\leq \log_2(d/2)$, this gives the bound
\begin{equation}
\label{eqn cst bound}
    \log_2(\delta(\Lambda_{d,e})) \leq 2d\log_2(d/2) + \log_2(d)^2 - \log_2(d).
\end{equation}
This bound has the same leading term as the bound we prove in Corollary \ref{cor disc bound}, but the lower order terms in \eqref{eqn cst bound} do not suffice to prove Theorem \ref{thm main}.
\end{remark}

\subsection{Minkowski's theorem on successive minima}
Suppose that $K \subseteq \RR^m$ is a compact, convex, centrally symmetric set. If $\Lambda \subseteq \RR^m$ is a rank-$m$ lattice, then for $1\leq i \leq m$ the \emph{$i$th successive minimum} of $\Lambda$ with respect to $K$, denoted $\lambda_i(\Lambda, K)$, is defined by
\[
    \lambda_i(\Lambda,K) := \min\{r \in \RR_{\geq 0} : \mathrm{span}(rK \cap \Lambda) \text{ has rank at least }i\}.
\]
Note that the $\lambda_i(\Lambda,K)$ are positive and weakly increasing with $i$.
The following classical theorem of Minkowski relates the successive minima, the volume of $K$, and the discriminant of $\Lambda$. See \cite[Chp. VIII, Thm. V]{cassels}.
\begin{thm}[Minkowski]
\label{thm mink}
Let $m\geq 1$, let $K \subseteq \RR^m$ be a compact, convex, centrally symmetric set, and let $\Lambda \subseteq \RR^m$ be a rank-$m$ lattice. Then
\[
    \vol(K)\prod_{i=1}^m \lambda_i(\Lambda,K) \leq 2^m \sqrt{\delta(\Lambda)}.
\]
\end{thm}

Suppose that $K = [-1,1]^m \subseteq \RR^m$.
Observe that, for $v \in \RR^m$ and $r \ge 0$, $v \in rK$ if and only if $||v||_\infty \leq r$ where 
\[
    ||(a_1, a_2, \ldots, a_m)||_\infty := \max_i |a_i|
\]
is the usual max norm on $\RR^m$.
We define $\lambda_i(\Lambda) := \lambda_i(\Lambda, [-1,1]^m)$.
Since $\vol([-1,1]^m) = 2^m$, we have the following useful direct corollary of Theorem \ref{thm mink}.

\begin{cor}
\label{cor simple mink}
If $\Lambda \subseteq \RR^m$ is a rank-$m$ lattice with discriminant $\delta(\Lambda)$, then
\[
    \prod_{i=1}^m \lambda_i(\Lambda) \leq \sqrt{\delta(\Lambda)}.
\]
\end{cor}

\subsection{Main Result}
We now prove the first main result. Recall that for $d \geq 2$,
\[
    B_d := \sup_f |\preper(f,\QQ)|\in [0,\infty],
\]
where the supremum is taken over all $f(x) \in \QQ[x]$ with $2 \leq \deg(f) \leq d$.

\begin{thm}
\label{thm main}
Let $d \geq 2$ be an integer.
Then there exists a polynomial $f_{d}(x) \in \QQ[x]$ with $2 \leq \deg(f_{d}) \leq d$ such that
\[
    f_{d}([d + \lfloor \log_2(d)\rfloor]) \subseteq [d].
\]
Hence for all $d\geq 2$,
\[
    B_d \geq d + \lfloor \log_2(d)\rfloor.
\]
\end{thm}

\begin{proof}
Let $d\geq 2$ and $e\geq 0$ be integers. Recall the rank-$(d+1)$ lattice $\Lambda_{d,e} \subseteq \RR^{d+e+1}$ constructed in Section \ref{sec lattices} spanned by the $d+1$ vectors
\[
    u_i := \left(\binom{0}{i}, \binom{1}{i},\ldots, \binom{d+e}{i}\right) \in \ZZ^{d+e+1}
\]
for $0 \leq i \leq d$.
Let $M$ be a real number such that $M > \frac{d\sqrt{d+e+1}}{2}$ and let $v_1, v_2, \ldots, v_{e}$ be an orthogonal basis for the orthogonal complement of $\Lambda_{d,e}$ in $\RR^{d+e+1}$ such that $||v_j|| = M$ for all $j$.
Define $\wt{\Lambda}_{d,e}$ to be the rank-$(d + e + 1)$ lattice spanned by the $u_i$ and $v_j$ with $0\leq i \leq d$ and $1 \leq j \leq e$.
Note that for any vector $w \in \wt{\Lambda}_{d,e}$ supported on some $v_j$ we have
\begin{equation}
\label{eqn support lower bound}
    ||w||_\infty \geq \frac{||w||}{\sqrt{d+e+1}} 
    \geq \frac{||v_j||}{\sqrt{d+e+1}}
    = \frac{M}{\sqrt{d+e+1}}
    > \frac{d}{2}.
\end{equation}
We claim that for $d\geq 33$ and $e := \lfloor \log_2(d/2)\rfloor = \lfloor\log_2(d)\rfloor - 1$ we have
\begin{equation}
\label{eqn lambda3 claim}
    \lambda_3(\wt{\Lambda}_{d,e}) < \frac{d}{2}.
\end{equation}
First we finish the proof of the theorem assuming \eqref{eqn lambda3 claim}, and then we return to prove the claim.

If \eqref{eqn lambda3 claim} holds, then there are three linearly independent vectors $w_1, w_2, w_3 \in \wt{\Lambda}_{d,e}$ such that $||w_i||_\infty < \frac{d}{2}$.
Thus \eqref{eqn support lower bound} implies that each $w_i$ is supported only on the $u_j$ with $0 \leq j \leq d$.
Linear independence implies that at least one $w_i$ must be supported on a $u_j$ with $j\geq 2$.
Suppose without loss of generality that $w_1 = \sum_{i=0}^{d}a_i u_i$ where $a_i \in \ZZ$ and $a_i \neq 0$ for some $i\geq 2$. If $g(x) := \sum_{i=0}^{d} a_i \binom{x}{i}$, then $w_1 = (g(0), g(1),\ldots, g(d+e))$. 
Hence $|g(i)| \leq ||w_1||_\infty < d/2$ for $0\leq i \leq d + e$.
Note that $g(x)$ is an integer-valued polynomial. 
Thus $f_{d}(x) := g(x-1) + \lfloor d/2\rfloor + 1$ is an integer-valued polynomial with $2 \leq \deg(f_{d}) \leq d$ such that for all $i \in [d+e+1] = [d + \lfloor \log_2(d)\rfloor]$,
\[
    0 \leq \lfloor d/2\rfloor - d/2 + 1 < f_{d}(i) < \lfloor d/2\rfloor + d/2 + 1 \leq d + 1.
\]
Therefore $f_{d}([d + \lfloor \log_2(d)\rfloor]) \subseteq [d]$, as we wished to show.

Now we turn to proving \eqref{eqn lambda3 claim}.
By construction, we have
\[
    \delta(\wt{\Lambda}_{d,e}) = \delta(\Lambda_{d,e})M^{2e}.
\]
If $i > d + 1$, then any set of $i$ independent vectors in $\wt{\Lambda}_{d,e}$ must contain at least one vector $w$ supported on some $v_j$.
Hence
$||w||_\infty \ge M/\sqrt{d+e+1}$
by \eqref{eqn support lower bound}.
Thus for $i > d + 1$,
\[
    \lambda_i(\wt{\Lambda}_{d,e}) \geq \frac{M}{\sqrt{d+e+1}} > \frac{d}{2}.
\]
The vectors $u_i$ all have integer entries, hence $\lambda_1(\wt{\Lambda}_{d,e}) \geq 1$.
The monotonicity of the $\lambda_i(\wt{\Lambda}_{d,e})$ gives us
\[
    \prod_{i=1}^{d+e+1}\lambda_i(\wt{\Lambda}_{d,e}) \geq \lambda_3(\wt{\Lambda}_{d,e})^{d-1}\Big(\frac{M}{\sqrt{d+e+1}}\Big)^{e}.
\]
Therefore, Corollary \ref{cor simple mink} implies that
\[
    \lambda_3(\wt{\Lambda}_{d,e})^{d-1}\Big(\frac{M}{\sqrt{d+e+1}}\Big)^{e} \leq \prod_{i=1}^{d+e+1}\lambda_i(\wt{\Lambda}_{d,e})
    \leq \sqrt{\delta(\wt{\Lambda}_{d,e})} 
    = \sqrt{\delta(\Lambda_{d,e})} \cdot M^{e},
\]
from which we conclude that
\[
    \log_2(\lambda_3(\wt{\Lambda}_{d,e})) \leq \frac{\log_2(\delta(\Lambda_{d,e})) + e\log_2(d+e+1)}{2(d-1)}.
\]
Corollary \ref{cor disc bound} implies that
\[
    \log_2(\delta(\Lambda_{d,e})) < 2(d-1)\log_2(d/2) - e\log_2(d+e+1)
\]
for $d\geq 33$, hence
\[
    \log_2(\lambda_3(\wt{\Lambda}_{d,e})) < \log_2(d/2),
\]
which is equivalent to \eqref{eqn lambda3 claim}. 

Now suppose that $2 \leq d \leq 32$. If $2 \leq d \leq 7$, then $\lfloor\log_2(d/2)\rfloor \leq 1$. 
Lagrange interpolation immediately implies the existence of polynomials $f_d(x) \in \QQ[x]$ with degree $d$ such that $f_d([d+1]) \subseteq [d]$. 
If $8 \leq d \leq 32$, then $\log_2(d/2) \leq 4$ and the sequence of polynomials $t_d(x)$ constructed in Corollary \ref{cor edge case family} satisfies
\[
    t_d([d+4]) \subseteq [5] \subseteq [d],
\]
which suffices to complete the proof.
\end{proof}

\begin{remark}
The idea of augmenting the lattice $\Lambda_{d,e}$ by arbitrary long vectors is borrowed from the proof of \cite[Theorem 1.5]{cohen}. This strategy greatly simplifies our original approach.
\end{remark}

\section{Common preperiodic points}
\label{sec common}

In this section we prove Proposition \ref{prop common preper intro} and part of Corollary \ref{cor lower bounds}(2), restated as Proposition \ref{prop common preper} and Theorem \ref{thm Cd lower bound} below.

\begin{lemma}
\label{lem: preimage}
If $f(x) \in \CC[x]$ is a degree-$d$ polynomial and $S \subseteq \CC$ is a set with $n$ elements, then
\[
    |f^{-1}(S)| \geq dn - d + 1.
\]
\end{lemma}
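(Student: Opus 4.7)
My plan is to count preimages with multiplicity and then subtract the defect coming from critical points. For each $s \in \CC$, the fiber $f^{-1}(s)$ is precisely the zero set of $f(x) - s$, a polynomial of degree $d$. Counted with multiplicity, $f(x) - s$ has exactly $d$ roots, so if I write $d - |f^{-1}(s)|$ for the \emph{multiplicity excess} at $s$, this is a non-negative integer that vanishes unless $s$ is a critical value of $f$. Since the fibers $f^{-1}(s)$ for distinct $s$ are disjoint, the preimage count I want decomposes as
\[
    |f^{-1}(S)| = \sum_{s \in S} |f^{-1}(s)| = dn - \sum_{s \in S}\bigl(d - |f^{-1}(s)|\bigr).
\]

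Next I would bound the total excess $\sum_{s \in S}(d - |f^{-1}(s)|)$ by the global excess $\sum_{s \in \CC}(d - |f^{-1}(s)|)$. The point is that if $f(x) - s$ factors as $\prod_i (x - \alpha_i)^{e_i}$, then $d - |f^{-1}(s)| = \sum_i (e_i - 1)$, and each $\alpha_i$ with $e_i \geq 2$ is a root of $f'(x)$ of multiplicity $e_i - 1$. Summing over all $s \in \CC$ therefore gives the degree of $f'(x)$, which is $d - 1$. Hence
\[
    \sum_{s \in S}\bigl(d - |f^{-1}(s)|\bigr) \leq \sum_{s \in \CC}\bigl(d - |f^{-1}(s)|\bigr) = \deg(f') = d - 1,
\]
and substituting into the previous display yields $|f^{-1}(S)| \geq dn - (d-1) = dn - d + 1$, as desired.

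There is no real obstacle here; the argument is essentially the Riemann--Hurwitz count restricted to $\PP^1 \to \PP^1$ with the point at infinity contributing no ramification away from itself. The only thing to take care of is the bookkeeping of multiplicities, which is why I prefer to state the identity $d - |f^{-1}(s)| = \sum_i (e_i - 1)$ explicitly before summing, so the connection to $\deg(f') = d - 1$ is transparent.
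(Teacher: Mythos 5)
Your proof is correct and follows essentially the same route as the paper: decompose $|f^{-1}(S)|$ fiber by fiber, identify the defect $d - |f^{-1}(s)|$ with the ramification contribution $\sum_i(e_i - 1)$, and bound the total defect by $\deg(f') = d - 1$. The only cosmetic difference is that you phrase the ramification count via the factorization of $f(x) - s$ rather than via ramification indices $e_p$ directly, but the content is identical.
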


\begin{proof}
Let $e_p$ denote the ramification index of $f(x)$ at $p \in \CC$. Then for all $q \in \CC$,
\[
    d = \sum_{p \in f^{-1}(q)} e_p = |f^{-1}(q)| + \sum_{p \in f^{-1}(q)} (e_p - 1).
\]
A point $p \in \CC$ has $e_p > 1$ if and only if $p$ is a root of $f'(x)$ with multiplicity $e_p - 1$, hence
\[
    d - 1 = \sum_{p \in \CC} (e_p - 1).
\]
Thus,
\[
    |f^{-1}(S)| = \sum_{q \in S}|f^{-1}(q)| = dn - \sum_{p \in f^{-1}(S)} (e_p - 1) \geq dn - d + 1. \qedhere
\]
\end{proof}

\begin{prop}
\label{prop common preper}
Suppose that $f(x) \in \QQ[x]$ is a degree $d\geq 2$ polynomial such that
\[
    f([m]) \subseteq [n]
\]
for some integers $m > n \geq 0$. Then
\begin{enumerate}
    \item $f^{-1}([n]) \subseteq \bigcap_{i=0}^{m-n} \preper(f(x) + i,\CC)$,
    \item $|f^{-1}([n])| \geq dn - d + 1$, and
    \item $\preper(f(x) + i,\CC) \neq \preper(f(x) + j,\CC)$ for all $0\leq i < j \leq m-n$.
\end{enumerate}
Hence
\[
    d(n - 1) + 1 \leq \left|\bigcap_{i = 0}^{m-n} \preper(f(x) + i,\CC)\right| < \infty.
\]
\end{prop}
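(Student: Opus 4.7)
The plan is to establish claims (1) and (2) by direct arguments, and to derive (3) and the finiteness of the common intersection jointly from the Baker--DeMarco theorem combined with the assertion $J_{f+i} \neq J_{f+j}$ for $i \neq j$.

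For (1), suppose $\alpha \in f^{-1}([n])$, so $f(\alpha) \in [n]$. Then for each $0 \leq i \leq m - n$,
\[
    (f+i)(\alpha) = f(\alpha) + i \in [n+i] \subseteq [m],
\]
using $i \leq m - n$. The hypothesis $f([m]) \subseteq [n]$ likewise gives $(f+i)([m]) \subseteq [n+i] \subseteq [m]$, so $[m]$ is forward-invariant under $f + i$, the forward orbit of $\alpha$ is contained in $\{\alpha\} \cup [m]$, and $\alpha$ is preperiodic. Item (2) is Lemma~\ref{lem: preimage} applied with $S = [n]$.

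For the key claim $J_{f+i} \neq J_{f+j}$, I would argue by contradiction. A classical rigidity theorem for polynomials with identical Julia sets (due to Julia--Fatou--Ritt; see also Atela--Hu and Beardon) asserts that two polynomials of degree $\geq 2$ with equal Julia set either commute under composition, or both are affinely conjugate to power maps $\pm z^d$, or both to Chebyshev polynomials $\pm T_d$. A direct check shows that neither exceptional family is closed under addition of a nonzero constant, so the power-map and Chebyshev exceptions cannot apply to the shifts $f+i, f+j$ with $i \neq j$. For the commuting case, Taylor expansion around $u = f(x)$ gives
\[
    (f+i)\circ(f+j)(x) - (f+j)\circ(f+i)(x) = f(f(x)+j) - f(f(x)+i) - (j-i),
\]
whose leading term in $x$ is $d\, a_d^{d}(j-i)\, x^{d(d-1)}$, where $a_d \neq 0$ is the leading coefficient of $f$. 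Since $d \geq 2$ and $j \neq i$, this polynomial is nonzero, so $f+i$ and $f+j$ do not commute, a contradiction.

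Finally, the Baker--DeMarco theorem on unlikely intersections states that two rational maps of degree $\geq 2$ share infinitely many complex preperiodic points if and only if their Julia sets (equivalently, their equilibrium measures) coincide. Combined with $J_{f+i} \neq J_{f+j}$, this forces each pairwise intersection $\preper(f+i,\CC) \cap \preper(f+j,\CC)$ to be finite, which immediately yields (3) (since each individual preperiodic set is infinite) together with the finiteness of the full intersection $\bigcap_{i=0}^{m-n}\preper(f+i,\CC)$. The lower bound $dn - d + 1$ on this intersection is then assembled from (1) and (2). The main obstacle is the Julia-set comparison: invoking the classical rigidity theorem, ruling out the power-map and Chebyshev exceptions, and verifying the commutator computation are the steps requiring the most care, though none is long.
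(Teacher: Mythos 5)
Your proofs of (1) and (2) are correct and essentially identical to the paper's: (1) by showing the forward orbit of any $\alpha \in f^{-1}([n])$ under $f+i$ lands in and stays in $[m]$, and (2) by Lemma~\ref{lem: preimage}.

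For (3) you take a genuinely different route. The paper gives a short, self-contained argument: $\preper(f,\CC)$ is a bounded nonempty set (it sits inside the filled Julia set), so there is some $q \in \preper(f,\CC)$ with $q + i \notin \preper(f,\CC)$; pulling $q$ back under $f$ then produces a point in exactly one of the two preperiodic sets. By contrast, you reduce to comparing Julia sets and then invoke a rigidity theorem for polynomials with a common Julia set. This route can be made to work, but as written it has a gap: the rigidity statement you quote is not correct. Two polynomials of degree $\ge 2$ with $J_f = J_g$ need not commute even outside the power-map and Chebyshev families. The actual theorem (Beardon, Schmidt--Steinmetz, Atela--Hu) only gives $f \circ g = \sigma \circ g \circ f$ for some affine Euclidean symmetry $\sigma$ of the Julia set, or (when the degrees agree) $g = \sigma \circ f$. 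A concrete counterexample to your version: take $f(z) = z^3 + c$ with $c \ne 0$ and $g(z) = \omega f(z)$ where $\omega$ is a primitive cube root of unity. Since $f(\omega w) = f(w)$, one checks $g^n = \omega f^n$, so $K_g = K_f$ and $J_g = J_f$; but $f\circ g = f^2$ while $g\circ f = \omega f^2$, so they do not commute, and for generic $c$ neither is conjugate to a power map or Chebyshev polynomial. So your case analysis is incomplete, and your commutator computation (which is correct) does not by itself yield a contradiction.

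The fix is simpler than what you attempted and does not require the rigidity theorem at all. Every polynomial of degree $\ge 2$ maps its Julia set onto itself, so if $J_{f+i} = J_{f+j} =: J$ then $(f+i)(J) = J$ and $(f+j)(J) = J$, hence $f(J) = J - i$ and $f(J) = J - j$. Subtracting gives $J = J + (j-i)$, which is impossible for a nonempty compact set when $j \ne i$. (Alternatively, keep your structure but use the corrected rigidity theorem: comparing leading coefficients in $f\circ g = \sigma\circ g\circ f$ forces $\sigma$ to be a translation fixing the compact set $J$, hence the identity, and then your commutator computation finishes the job.) Finally, one small caution about your statement of Baker--DeMarco: the equivalence ``$\preper(f)\cap\preper(g)$ infinite $\iff J_f = J_g$'' is valid for polynomials, since there $J_f$ determines the equilibrium measure, but it is not correct for general rational maps; since everything here is a polynomial you are safe, but it is worth phrasing the appeal via equilibrium measures rather than Julia sets.
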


\begin{proof}
Let $0 \leq i \leq m -n$. If $f([m]) \subseteq [n]$, then
\[
    f(f^{-1}([n])) + i \subseteq [n + i]\subseteq [m] \subseteq f^{-1}([n]).
\]
Hence $f^{-1}([n]) \subseteq \preper(f(x) + i,\CC)$, proving (1). 
The lower bound $|f^{-1}([n])| \geq dn - d + 1$ follows immediately from Lemma \ref{lem: preimage} since $[n]$ contains $n$ points.

For (3) it suffices to prove that for any polynomial $h(x)$ and any positive integer $i$, $\preper(h(x),\CC) \neq \preper(h(x) + i,\CC)$. 
Since $h(x)$ is a polynomial, $\infty$ is a superattracting fixed point, and thus the set $\preper(h(x),\CC)$ of finite complex preperiodic points of $h(x)$ is bounded. 
Therefore, there exists some $q \in \preper(h(x),\CC)$ such that $q + i \not\in\preper(h(x),\CC)$.
Let $p \in h^{-1}(q)$.
Then $p \in \preper(h(x),\CC)$ by construction.
If $p \in \preper(h(x) + i,\CC)$, then $h(p) + i = q + i \in \preper(h(x)+i,\CC) \setminus \preper(h(x),\CC)$.
Otherwise, $p \in \preper(h(x),\CC) \setminus \preper(h(x) + i, \CC)$.
In either case, we have $\preper(h(x),\CC) \neq \preper(h(x)+i,\CC)$.

Finally, Baker and DeMarco \cite[Thm. 1.2]{BD} proved that if $f(x), g(x) \in \CC(x)$ are rational functions of degree at least 2, then $\preper(f(x),\CC) \neq \preper(g(x),\CC)$ implies $\preper(f(x),\CC) \cap \preper(g(x),\CC)$ is finite. Therefore,
\[
    d(n - 1) + 1 \leq \Big|\bigcap_{i = 0}^{m-n} \preper(f(x) + i,\CC)\Big| < \infty.\qedhere
\]
\end{proof}

\begin{example}
Consider the degree-6 polynomial
\[
    f(x) = \frac{x^6 - 45x^5 + 775x^4 - 6375x^3 + 25504x^2 - 45060x + 30960}{720}.
\]
One may check that
\[
    f([14]) \subseteq [10].
\]
Therefore Proposition \ref{prop common preper} implies that $\bigcap_{i=0}^4\preper(f(x) + i,\CC)$ is finite and contains at least 55 points.
\end{example}

Recall that $C_d$ for $d \geq 2$ is defined by
\[
    C_d := \sup_{f,g}|\preper(f,\CC) \cap \preper(g,\CC)|,
\]
where the supremum is taken over all $f(x), g(x) \in \CC[x]$ with $2 \leq \deg(f), \deg(g) \leq d$ such that $\preper(f,\CC) \neq \preper(g,\CC)$. 

\begin{thm}
\label{thm Cd lower bound}
Let $d\geq 2$ be an integer. There exists a polynomial $f_{d}(x) \in \QQ[x]$ with $2 \leq \deg(f_{d}) \leq d$ such that
\[
    \Big|\preper(f_{d}(x)+i,\CC) \cap \preper(f_{d}(x)+j,\CC)\Big| < \infty \text{ for all $0 \le i < j \le \log_2(d)$}
\]
and
\begin{equation}
\label{eqn common preper}
    \Big|\bigcap_{i=0}^{\lfloor \log_2(d)\rfloor} \preper(f_{d}(x) + i,\CC)\Big| \ge \deg(f_{d})(d - 1) + 1.
\end{equation}
Furthermore, there exist infinitely many
$d$ such that
\begin{equation}
\label{eqn Cd bound}
    C_d \geq d^2 + d\lfloor \log_2(d)\rfloor - 2d + 1.
\end{equation}
\end{thm}

\begin{proof}
Theorem \ref{thm main} implies that for $d\geq 2$ there exists a polynomial $f_{d}(x) \in \QQ[x]$ with $2 \leq \deg(f_{d}) \leq d$ such that
\[
    f_{d}([d + \lfloor\log_2(d)\rfloor]) \subseteq [d].
\]
Hence \eqref{eqn common preper} follows from Proposition \ref{prop common preper}.

Let $e_d := \lfloor \log_2(d)\rfloor$, then 
\[
    f_{d}([d+e_d]) \subseteq [d] \subseteq [d+e_d-1].
\]
Hence applying Proposition \ref{prop common preper} with $m = d + e_d$ and $n = d + e_d - 1$, we have
\[
    \deg(f_{d})(d + e_d - 2) + 1 \leq |\preper(f_{d}(x),\CC) \cap \preper(f_{d}(x)+1,\CC)| < \infty.
\]
Cohen, Shpilka, and Tal \cite[Cor. 1.2]{cohen} prove that for all $m$ sufficiently large, if $f(x) \in \QQ[x]$ is a polynomial with $\deg(f) \geq 2$ such that $f([m]) \subseteq [m-1]$, then
\[
    \deg(f) \geq m\Big(1 - \frac{4}{\log_2\log_2(m)}\Big).
\]
Hence, for all $d$ sufficiently large,
\[
    \deg(f_d) \geq (d + e_d)\Big(1 - \frac{4}{\log_2\log_2(d+e_d)}\Big)
    \geq d\Big(1 - \frac{4}{\log_2\log_2(d)}\Big).
\]
Note that the quantity on the right-hand side tends to $\infty$ with $d$; thus, the same is true for $\deg(f_d)$.

Now, let $d' \geq 2$, and let $d := \deg f_{d'} \le d'$. By the conclusion of the previous paragraph, there are infinitely many integers $d$ arising in this way. In this case, we have
\[
    C_{d} \ge d(d' + \lfloor \log_2(d)\rfloor - 2) + 1
        \geq d^2 + d\lfloor \log_2(d)\rfloor - 2d + 1.
\]
Thus \eqref{eqn Cd bound} holds for infinitely many $d$.
\end{proof}

\section{Examples of exceptional preperiodic behavior in every degree}
\label{sec d + 6}

In Theorem \ref{thm main} we showed that for all sufficiently large degrees $d$, there exists a degree-at-most-$d$ polynomial with at least $d + \lfloor \log_2(d)\rfloor$ rational preperiodic points.
However, the proof is not constructive, in the sense that it does not allow us to provide an explicit formula for such a polynomial.
In this section we construct a family of polynomials $r_d(x) \in \QQ[x]$ such that, for all $d\ge 2$, $r_d$ is a degree-$d$ polynomial with at least $d + 6$ rational preperiodic points. For
$d < 64$, this improves the lower bound on $B_d$ obtained from Theorem~\ref{thm main}.

First we introduce a doubly periodic sequence $\rho(m,d)$ and use its values to interpolate an auxiliary sequence of polynomials $s_d(x)$.

\begin{lemma}
\label{lem rho props}
There is a unique function $\rho : \ZZ^2 \to \ZZ$ satisfying the following properties:
\begin{enumerate}
    \item[(i)] $\rho(m + 3,d) = -\rho(m,d)$ for all $(m,d) \in \ZZ^2$,
    \item[(ii)] $\rho(m, d + 1) = -\rho(m+1,d)$ for all $(m,d) \in \ZZ^2$, and
    \item[(iii)] $\rho(0,0) = \rho(1,0) = 1$, and $\rho(2,0) = 0$.
\end{enumerate}
Furthermore, for all $(m,d) \in \ZZ^2$,
\begin{enumerate}
    \item $\rho(m,d+3) = \rho(m,d)$,
    \item $\rho(m+1,d+1) = \rho(m,d) + \rho(m,d+1)$,
    \item $\rho(m,d) = (-1)^d\rho(d + 1 - m, d)$.
\end{enumerate}
\end{lemma}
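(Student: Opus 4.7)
The plan is first to settle existence and uniqueness, and then to read off the three ``furthermore'' properties by combining (i)--(iii) with a small induction on $d$.

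For uniqueness, note that (i) forces each row $\rho(\cdot, d)$ to be antiperiodic of period $3$, so it is determined by its values at $m \in \{0, 1, 2\}$, and (ii), read as $\rho(m+1, d) = -\rho(m, d+1)$, lets me pass between adjacent rows in either direction. Combined, these force every value of $\rho$ from the three initial conditions in (iii). For existence, I would introduce an auxiliary one-variable sequence $\rho_0 : \ZZ \to \ZZ$ determined by $\rho_0(m+3) = -\rho_0(m)$ together with $\rho_0(0) = \rho_0(1) = 1$ and $\rho_0(2) = 0$, and set
\[
\rho(m, d) := (-1)^d \rho_0(m + d).
\]
A direct verification shows this satisfies (i), (ii), and (iii). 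This explicit model will also serve as a sanity check for the remaining properties.

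Property (1) is immediate from three applications of (ii) followed by one of (i):
\[
\rho(m, d+3) = -\rho(m+1, d+2) = \rho(m+2, d+1) = -\rho(m+3, d) = \rho(m, d).
\]
For (2), applying (ii) to both $\rho(m+1, d+1)$ and $\rho(m, d+1)$ converts the asserted identity into the equivalent three-term recurrence
\[
\rho(m+2, d) = \rho(m+1, d) - \rho(m, d).
\]
At $d = 0$ this can be checked on $m \in \{0, 1, 2\}$ directly from (iii), with (i) giving the $m = 2$ case, and (i) then extends the recurrence to all $m \in \ZZ$. Since (ii) sends the recurrence at level $d$ to the recurrence at level $d+1$ (and likewise in reverse), this propagates to every $d \in \ZZ$.

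For (3), I plan to induct on $d$. The base case $\rho(m, 0) = \rho(1-m, 0)$ is checked at $m \in \{0, 1, 2\}$ from (iii), and (i) extends it to all $m \in \ZZ$. For the inductive step, assuming $\rho(m, d) = (-1)^d \rho(d+1-m, d)$ for all $m$, one application of (ii) to the left side and one to the right, together with (i) to align indices, converts the identity at level $d$ into the identity at level $d+1$. The main obstacle is (3), which has to reconcile the period-three antiperiodicity in $m$, the sign $(-1)^d$, and the reflection $m \mapsto d+1-m$ simultaneously; once the base case is secured, however, the induction is essentially bookkeeping, and the explicit model $\rho(m, d) = (-1)^d \rho_0(m+d)$ gives a quick way to double-check each step.
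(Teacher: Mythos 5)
Your proof is correct and takes essentially the same route as the paper: both establish existence and uniqueness by reducing to the single row $\rho(\cdot,0)$ via the identity $\rho(m,d) = (-1)^d\rho(m+d,0)$ (your $\rho_0$ is exactly this row), both derive (1) by composing (ii) three times with (i), and both reduce (2) to the three-term recurrence $\rho(m+2,d) = \rho(m+1,d) - \rho(m,d)$ checked at $d=0$.

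The only organizational difference is in (3): the paper verifies $\rho(m,0) = \rho(1-m,0)$ on row zero and then substitutes into the closed form $\rho(m,d) = (-1)^d\rho(m+d,0)$, splitting into $d$ even and $d$ odd, whereas you run an induction on $d$ with the same base case and push the reflection symmetry up one row at a time using (i) and (ii). Both are clean and essentially bookkeeping once the base case is secured; your inductive step (one application of (ii) to each side, one of (i) to realign the shift by three) does close correctly, which I confirmed.
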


\begin{proof}
The initial values together with (i) imply that $\rho(m,0)$ is well-defined for all $m\in \ZZ$.
Then (ii) implies that $\rho(m,d) = (-1)^d\rho(m+d,0)$. 
Hence these three properties uniquely determine $\rho(m,d)$ for all $(m,d) \in \ZZ^2$.

\begin{table}[H]
    \centering
    \begin{tabular}{|c|c|c|c|c|c|c|}
    \hline
        $m$ & 0 & 1 & 2 & 3 & 4 & 5 \\
        \hline
        $\rho(m,0)$ & 1 & 1 & 0 & $-1$ & $-1$ & 0\\
    \hline
    \end{tabular}
    \caption{Initial values that determine $\rho(m,n)$}
    \label{tab:init vals}
\end{table}

(1) Properties (i) and (ii) imply that
\[
    \rho(m,d+3) = (-1)^3\rho(m+3,d) = -\rho(m+3,d) = \rho(m,d).
\]

(2) Since $\rho(m+6,d) = \rho(m,d)$, we can check the following identity by inspection:
\[
    \rho(m+2,0) = \rho(m+1,0) - \rho(m,0).
\]
Replacing $m$ by $m+d$ yields
\[
    \rho((m+1) + (d+1),0) = \rho(m + (d+1),0) - \rho(m + d,0),
\]
and repeatedly applying (ii) gives
\[
(-1)^{d+1}\rho(m+1,d+1) = (-1)^{d+1}\rho(m,d+1) + (-1)^{d+1}\rho(m,d);
\]
dividing by $(-1)^{d+1}$ yields (2).

(3) Using $\rho(m+6,d) = \rho(m,d)$ we may verify that for all $m \in \ZZ$,
\begin{equation}
\label{eqn symmetry row 0}
    \rho(m,0) = \rho(1 - m,0).
\end{equation}
Thus, if $d = 2n$ is even, then
\[
    \rho(m,2n) = \rho(m+2n,0)
    = \rho(m - 4n,0)
    = \rho(4n + 1 - m,0)
    = \rho(2n + 1 - m, 2n).
\]
Similarly, if $d = 2n + 1$ is odd, then \eqref{eqn symmetry row 0} implies that
\[
    \rho(m,2n+1) = -\rho(m+2n+1,0)
    =-\rho(m-4n+1,0)
    =-\rho(4n-m,0)
    =\rho(2n-1-m,2n+1).
\]
Finally, (i) implies
\[
    \rho(2n-1-m,2n+1)=-\rho(2n+2-m,2n+1).
\]
Hence in either case,
\[
    \rho(m,d) = (-1)^d\rho(d+1-m,d).\qedhere
\]
\end{proof}

Let $s_d(x) \in \QQ[x]$ be the unique degree-at-most-$d$ polynomial such that
\[
    s_d(m - \tfrac{d+1}{2}) = \rho(m,d),
\]
for $0\leq m \leq d$.
Lemma \ref{lem sd props} establishes some basic properties of $s_d(x)$.
Let $\delta$ denote the \emph{centered difference operator} defined by
\[
    \delta f(x) := f(x + \tfrac{1}{2}) - f(x - \tfrac{1}{2}).
\]

\begin{lemma}
\label{lem sd props}
Let $d\geq 0$.
\begin{enumerate}
    \item $s_d(-x) = (-1)^ds_d(x)$,
    \item $\delta s_{d+1}(x) = s_{d}(x)$,
    \item $\deg(s_d) = d$,
    \item $s_d(d+1 - \tfrac{d+1}{2}) = \rho(d+1,d)$,
    \item $s_d(d + 2 - \tfrac{d+1}{2}) = \rho(d+2,d) + 1$,
    \item $s_d(d + 3 - \tfrac{d+1}{2}) = \rho(d+3,d) + d + 2$.
\end{enumerate}
\end{lemma}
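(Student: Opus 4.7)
The plan is to prove the parts in the order (2), (3), (4), (1), (5), (6), since (1) needs (4) as input and (5)--(6) follow the same inductive template as (4). For (2), both $\delta s_{d+1}(x)$ and $s_d(x)$ have degree at most $d$, so it suffices to check they agree at the $d+1$ interpolation points $x = m - (d+1)/2$, $0 \le m \le d$; relating $x \pm 1/2$ to the interpolation set for $s_{d+1}$ yields $\delta s_{d+1}(x) = \rho(m+1,d+1) - \rho(m,d+1)$, and Lemma \ref{lem rho props}(2) collapses this to $\rho(m,d)$. Part (3) follows by induction starting from $s_0 \equiv 1$: the expansion $(x+1/2)^{d+1} - (x-1/2)^{d+1} = (d+1)x^d + O(x^{d-2})$ shows that the leading coefficient of $\delta s_{d+1}$ is $(d+1)$ times that of $s_{d+1}$, so by (2) it inherits nonzeroness from the leading coefficient of $s_d$.

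For (4), I would induct on $d$ with trivial base case $s_0(1/2) = 1 = \rho(1,0)$. Applying (2) at $x = (d+1)/2$ gives $s_{d+1}((d+2)/2) = s_{d+1}(d/2) + s_d((d+1)/2)$; the first summand equals $\rho(d+1,d+1)$ because $d/2 = (d+1) - (d+2)/2$ lies in the interpolation set of $s_{d+1}$, the second equals $\rho(d+1,d)$ by the inductive hypothesis, and Lemma \ref{lem rho props}(2) with $m = d+1$ combines them into $\rho(d+2,d+1)$. With (4) in hand, (1) follows because $s_d(-x)$ and $(-1)^d s_d(x)$ agree at $d+1$ points: at $x = m - (d+1)/2$ for $1 \le m \le d$ both equal $\rho(d+1-m,d)$ (the first by interpolation since $d+1-m \in \{1,\ldots,d\}$, the second via Lemma \ref{lem rho props}(3)); and at the extra point $x = -(d+1)/2$ part (4) and Lemma \ref{lem rho props}(3) with $m=0$ together give $s_d((d+1)/2) = \rho(d+1,d) = (-1)^d \rho(0,d)$.

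Parts (5) and (6) use the same pattern as (4): apply (2) at $x = (d+3)/2$ and $(d+5)/2$ respectively to express $s_{d+1}((d+4)/2)$ and $s_{d+1}((d+6)/2)$ as $s_{d+1}(\text{previous point}) + s_d(\text{induction point})$, and close with Lemma \ref{lem rho props}(2) at the right index. The $+1$ and $+d+2$ corrections propagate cleanly because each inductive step introduces exactly one $s_d$-evaluation that itself carries the correction from the previous case. The main difficulty in the whole argument is purely bookkeeping: matching the $\pm 1/2$ shifts between the interpolation sets of $s_d$ and $s_{d+1}$, and keeping straight which half-integer input falls inside the valid interpolation range versus which requires a previously established part of the lemma (in particular, the interdependence of (1) and (4) is what forces the non-sequential ordering).
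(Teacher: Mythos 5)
Your proposal is correct, and parts (2), (3), (5), (6) are essentially the paper's argument (the paper writes the inductions for (5) and (6) as descending from $d$ to $d-1$, but the content is identical to your ascending version). The genuinely different choice is in the handling of (1) and (4), where you invert the paper's dependency.

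The paper proves (1) \emph{first} and without induction: it observes that for any $f$ of degree at most $d$, the polynomial $f(x) - (-1)^d f(-x)$ has degree at most $d-1$ (the top-degree terms cancel identically), so it suffices that $s_d(-x)$ and $(-1)^d s_d(x)$ agree on the $d$ points $\{m - \tfrac{d+1}{2} : 1 \le m \le d\}$, which is exactly what Lemma~\ref{lem rho props}(3) gives. Then (4) is a one-line consequence: $s_d(\tfrac{d+1}{2}) = (-1)^d s_d(-\tfrac{d+1}{2}) = (-1)^d \rho(0,d) = \rho(d+1,d)$. You instead prove (4) by the same inductive template as (5)--(6), using (2) and Lemma~\ref{lem rho props}(2), and then derive (1) by counting $d+1$ agreement points, where (4) supplies the one point ($x = -\tfrac{d+1}{2}$) lying just outside the interpolation range. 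Both routes are sound; the paper's is a bit leaner because the degree-drop observation means you never need that extra point, and so (1) has no dependence on (4) at all. In particular your closing remark that ``the interdependence of (1) and (4) is what forces the non-sequential ordering'' overstates the situation---the reordering is an artifact of choosing to prove (1) by brute point-counting on a degree-$d$ difference; with the parity trick the lemma can be proved in the order stated.
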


\begin{proof}
(1) Suppose $f(x) \in \RR[x]$ is a degree-at-most-$d$ polynomial.
Then $f(x) - (-1)^df(-x)$ has degree at most $d - 1$.
Hence if there are at least $\lfloor \tfrac{d+1}{2}\rfloor$ distinct pairs $\pm a$ of real numbers for which $f(a) = (-1)^df(-a)$, then it follows that the identity $f(x) = (-1)^df(-x)$ holds in $\RR[x]$.

Lemma \ref{lem rho props}(3) implies that for $1 \leq m \leq d$,
\[
    s_d(m - \tfrac{d+1}{2}) = \rho(m,d)
    = (-1)^d\rho(d + 1 - m, d)
    = (-1)^ds_d(d + 1 - m - \tfrac{d+1}{2})
    = (-1)^ds_d(\tfrac{d+1}{2} - m),
\]
since $1 \leq m \leq d$ is equivalent to $1\leq d + 1 - m \leq d$.
The set $\{m - \tfrac{d+1}{2} : 1 \leq m \leq d\}$ contains at least $\lfloor \tfrac{d+1}{2}\rfloor$ pairs $\pm a$, hence it follows that $s_d(x) = (-1)^ds_d(-x)$ for all $d\geq 0$.

(2) Let $f(x)$ be the degree-at-most-$d$ polynomial
\[
    f(x) := \det s_{d+1}(x) = s_{d+1}(x + \tfrac{1}{2}) - s_{d+1}(x - \tfrac{1}{2}).
\]
If $0\leq m \leq d$, then by Lemma \ref{lem rho props}(2),
\begin{align*}
    f(m- \tfrac{d+1}{2}) &= s_{d+1}(m + 1 - \tfrac{d+2}{2}) - s_{d+1}(m - \tfrac{d+2}{2})\\
    &= \rho(m+1,d+1) - \rho(m,d+1)\\
    &= \rho(m,d)\\
    &= s_d(m - \tfrac{d+1}{2}).
\end{align*}
Hence $f(x) = s_d(x)$.

(3) If $f(x)$ has degree $d\geq 1$, then $\delta f(x)$ has degree $d - 1$.
Since $s_0(x) = 1$ has degree 0 by construction, it follows from (2) that $\deg(s_d) = d$ for all $d\geq 0$.

(4) By (1) and Lemma \ref{lem rho props}(3),
\[
    s_d(d + 1 - \tfrac{d+1}{2}) = (-1)^ds_d(0 - \tfrac{d+1}{2}) 
    = (-1)^d\rho(0,d)
    = \rho(d+1,d).
\]

(5) We prove this identity by induction on $d$.
Since $s_0(x) = 1$ is constant, 
\[
    s_0(0 + 2 - \tfrac{0 + 1}{2}) = 1 = \rho(2,0) + 1.
\]
Now let $d\geq 1$ and suppose that the identity (5) holds for $d - 1$.
By (2), (4), and Lemma \ref{lem rho props}(2),
\begin{align*}
    s_d(d + 2 - \tfrac{d+1}{2}) &= s_d(d+1 - \tfrac{d+1}{2}) + s_{d-1}(d + 1 - \tfrac{d}{2})\\
    &= \rho(d+1,d) + \rho(d+1,d-1) + 1\\
    &= \rho(d+2,d) + 1.
\end{align*}

(6) Following the induction in (5) we first observe that
\[
    s_0(0 + 3 - \tfrac{0+1}{2}) = 1 = \rho(3,0) + 0 + 2.
\]
If $d\geq 1$ and we suppose that (6) holds for $d - 1$, then by (2), (5), and Lemma \ref{lem rho props}(2),
\begin{align*}
    s_d(d+3 - \tfrac{d+1}{2}) &= s_d(d+2 - \tfrac{d+1}{2}) + s_{d-1}(d+2 - \tfrac{d}{2})\\
    &= \rho(d+2,d) + 1 + \rho(d+2,d-1) + d - 1 + 2\\
    &= \rho(d+3,d) + d + 2.\qedhere
\end{align*}
\end{proof}

After a change of coordinates, the polynomials $s_d(x)$ provide explicit examples of a family of polynomial that compress many consecutive integers into an interval of fixed length.
These examples cover the low degree cases needed to complete the proof of Theorem \ref{thm main}.
\begin{cor}
\label{cor edge case family}
For $d\geq 0$ let $t_d(x) \in \QQ[x]$ be the degree $d$ polynomial defined by
\[
    t_d(x) := s_d(x - 2 - \tfrac{d+1}{2}) + 3.
\]
Then
\[
    t_d([d+4]) \subseteq [5].
\]
\end{cor}

\begin{proof}
Lemma \ref{lem sd props} and the definition of $s_d(x)$ implies that $s_d(m - \tfrac{d+1}{2}) \in [-2,2] \cap \ZZ$ for all integers $m$ such that $-2 \leq m \leq d + 2$. Hence it follows that $t_d([d+4]) \subseteq [5]$.
\end{proof}

\subsection{Constructing \texorpdfstring{$r_d(x)$}{rd(x)}}
\label{sec construct rd}
For $d\geq 0$, let $r_d(x) \in \QQ[x]$ be the polynomial sequence defined by
\[
    r_d(x) := \begin{cases} 
    s_d(x -3 - \tfrac{d+1}{2}) + 2 & \text{if $d$ is even,}\\ 
    s_d(x - 3 - \tfrac{d+ 1}{2}) - x + d + 6& \text{if $d$ is odd.}\end{cases} 
\]

\begin{thm}
\label{thm d+6}
For all $d\geq 2$, $r_d(x)$ is a degree-$d$, integer-valued polynomial such that if $d$ is even, then
\[
    r_d([d+6]) \subseteq [d+5],
\]
and if $d$ is odd, then
\[
    r_d([d+6]) \subseteq [d + 4].
\]
Thus,
\begin{enumerate}
    \item $B_d \geq d + 6$ for all $d\geq 2$, and
    \item $C_d \geq d^2 + 4d + 1$ for all $d\geq 2$.
\end{enumerate}
\end{thm}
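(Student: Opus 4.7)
The plan is to verify the inclusions $r_d([d+6]) \subseteq [d+5]$ (for $d$ even) and $r_d([d+6]) \subseteq [d+4]$ (for $d$ odd) by a direct case analysis, and then read off conclusions (1) and (2) as quick corollaries of Proposition~\ref{prop common preper}. The bulk of the work is evaluating $r_d(j)$ for each $j \in [d+6]$. The substitution $j = m+3$ turns $x - 3 - \tfrac{d+1}{2}$ into $m - \tfrac{d+1}{2}$, so that for $j \in \{3, 4, \ldots, d+3\}$ (i.e., $m \in \{0, 1, \ldots, d\}$) the defining identity $s_d(m - \tfrac{d+1}{2}) = \rho(m,d)$ applies directly, and for $j \in \{d+4, d+5, d+6\}$ (i.e., $m \in \{d+1, d+2, d+3\}$) parts (4), (5), (6) of Lemma~\ref{lem sd props} supply the needed values. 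The two remaining arguments $j \in \{1, 2\}$ correspond to $m \in \{-2, -1\}$; here I would use the parity symmetry in Lemma~\ref{lem sd props}(1) to rewrite each $s_d$ of a negative argument as $(-1)^d$ times $s_d((d+3) - \tfrac{d+1}{2})$ or $s_d((d+2) - \tfrac{d+1}{2})$, respectively, and then apply parts (6) and (5) a second time.

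With those formulas in hand, the required inclusions follow from the trivial bound $\rho(m,d) \in \{-1, 0, 1\}$. In the even case each value $r_d(j)$ lands either in $\{1, 2, 3, 4\}$ or in $\{d+3, d+4, d+5\}$, giving $r_d([d+6]) \subseteq [d+5]$. In the odd case the boundary contributions lie in $\{1, 2, 3\} \cup \{d+1, d+2, d+3, d+4\}$, while for the bulk range $3 \le j \le d+3$ the value $r_d(j) = \rho(j-3, d) - (j-3) + d + 3$ lies in $[2, d+4]$ because $j-3 \in [0, d]$. Integer-valuedness of $r_d$ follows by noting that $T_d(x) := s_d(x - \tfrac{d+1}{2})$ has $T_d(0), \ldots, T_d(d) \in \ZZ$, so its Newton forward-difference expansion in the $\binom{x}{i}$ basis has integer coefficients; the affine shift and the additions $+2$ or $-x + d + 6$ used to define $r_d$ preserve this property. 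Finally, $\deg(r_d) = d$ by Lemma~\ref{lem sd props}(3), since the linear correction in the odd case cannot cancel a degree-$d$ term when $d \ge 2$.

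For part (1), the inclusion $r_d([d+6]) \subseteq [d+6]$ shows that $r_d$ maps the finite set $[d+6]$ into itself, so each of its $d+6$ elements has a finite forward orbit and is therefore rationally preperiodic, giving $B_d(\QQ) \ge d+6$. For part (2) I would apply Proposition~\ref{prop common preper} with $(m,n) = (d+6, d+5)$; this is valid in both parities (using $[d+4] \subseteq [d+5]$ in the odd case). The proposition then yields $|\preper(r_d, \CC) \cap \preper(r_d + 1, \CC)| \ge d(d+5) - d + 1 = d^2 + 4d + 1$, which is exactly the claimed bound on $C_d$. I expect the only real obstacle to be the bookkeeping across the seven subcases, where the parity-dependent definition of $r_d$ has been tailored precisely to offset the $(-1)^d$ coming from Lemma~\ref{lem sd props}(1); getting the signs right at $j \in \{1, 2\}$ is the one place where a careless slip would break the argument.
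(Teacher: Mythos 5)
Your proposal is correct and follows essentially the same route as the paper: evaluate $r_d(j)$ on the interior $3 \le j \le d+3$ via the defining identity for $s_d$, on $j \in \{d+4,d+5,d+6\}$ via Lemma~\ref{lem sd props}(4)--(6), and on $j \in \{1,2\}$ via the parity symmetry of $s_d$, then apply Proposition~\ref{prop common preper} with $(m,n)=(d+6,d+5)$. The paper organizes the same computation slightly differently (handling $j \in \{1,2\}$ by a symmetry of $r_d$ itself), and you have a harmless slip in the odd-case boundary set ($r_d(1) \in [2,4]$ so the value $4$ can occur, though this still lies in $[1,d+4]$), but the argument is the same.
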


\begin{proof}

Lemma~\ref{lem sd props}(3) implies that $r_d(x)$ has degree $d$, and the fact that $s_d(y - \frac{d+1}{2})$ is an integer for all $0 \le y \le d$, and therefore $r_d(x)$ is an integer for all $3 \le x \le d + 3$, implies that $r_d$ is integer-valued for all $d \ge 0$.

Suppose that $d\geq 2$ is even.
If $3\leq k \leq d + 4$, then the definition of $s_d(x)$ and Lemma \ref{lem sd props}(4) gives us,
\[
    r_d(k) = s_d(k - 3 - \tfrac{d+1}{2}) + 2 = \rho(k-3,d) + 2 \in [1,3].
\]
From $s_d(x)$ even we find that
\[
    r_d(d + 7 - x) = s_d(\tfrac{d+7}{2} - x) + 2 = s_d(x - \tfrac{d+7}{2}) + 2 = r_d(x).
\]
Hence by Lemma \ref{lem sd props}(5) and (6),
\begin{align*}
    r_d(1) &= r_d(d+6) = s_d(d + 3 - \tfrac{d+1}{2}) + 2 = \rho(d + 3,d) + d + 4 \in [d+3,d+5],\\
    r_d(2) &= r_d(d+5) = s_d(d + 2 - \tfrac{d+1}{2}) + 2 = \rho(d+2,d) + 3 \in [2,4].
\end{align*}
Thus if $d$ is even,
\[
    r_d([d+6]) \subseteq [d+5],
\]
from which it follows that $B_d \ge |\preper(r_d(x), \QQ)| \ge d + 6$.

Next suppose that $d \geq 2$ is odd. 
If $3 \leq k \leq d + 4$, then as above we have
\[
    r_d(k) = s_d(k - 3 - \tfrac{d+1}{2}) - k + d + 6 = \rho(k-3,d)-k+d+6 \in [1,d+4].
\]
Lemma \ref{lem sd props}(5) and (6) gives us
\begin{align*}
    r_d(d+5) &= s_d(d + 2 - \tfrac{d+1}{2}) + 1 = \rho(d+2,d) + 2 \in [1,3],\\
    r_d(d+6) &= s_d(d + 3 - \tfrac{d+1}{2}) = \rho(d+3,d) + d + 2 \in [d+1,d+3].
\end{align*}
Since $s_d(x)$ is odd,
\begin{align*}
    d + 5 - r_d(d + 7 - x) &= d + 5 - s_d((d + 7 - x) - 3 - \tfrac{d+1}{2}) + (d + 7 - x) - d - 6\\
    &= -s_d(-x + 3 + \tfrac{d+1}{2}) - x + d + 6\\
    &= s_d(x - 3 - \tfrac{d+1}{2}) - x + d + 6\\
    &= r_d(x).
\end{align*}
Thus,
\begin{align*}
    r_d(1) &= d + 5 - r_d(d+6) \in [2,4]\\
    r_d(2) &= d + 5 - r_d(d+5) \in [d+2,d+4].
\end{align*}
Therefore,
\[
    r_d([d+6]) \subseteq [d+4],
\]
and it follows that $B_d \geq d + 6$ for all $d\geq 2$.
Since for either parity of $d$ we have
\[
    r_d([d+6]) \subseteq [d + 5],
\]
Proposition \ref{prop common preper} implies that for all $d\geq 2$,
\[
    |\preper(r_d(x),\CC) \cap \preper(r_d(x)+1,\CC)| \geq d(d + 4) + 1 = d^2 + 4d + 1.
\]
Hence $C_d \geq d^2 + 4d + 1$ for all $d\geq 2$.
\end{proof}

Figure \ref{fig examples} illustrates the typical behavior of the polynomials $r_d(x)$ in the interval $[1,d+6]$.

\begin{figure}[H]
    \centering
    \includegraphics[scale=.5]{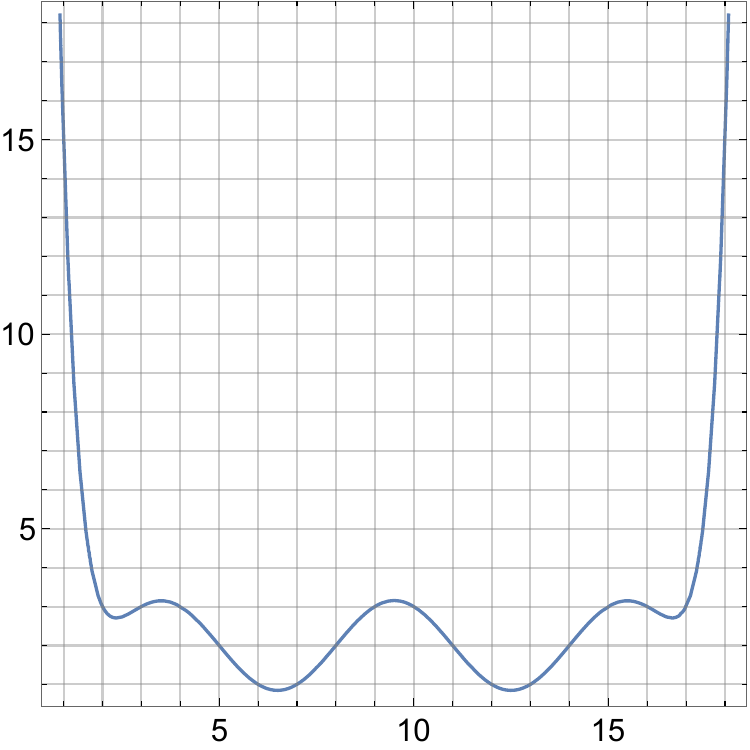}
    \hspace{.5in}
    \includegraphics[scale=.5]{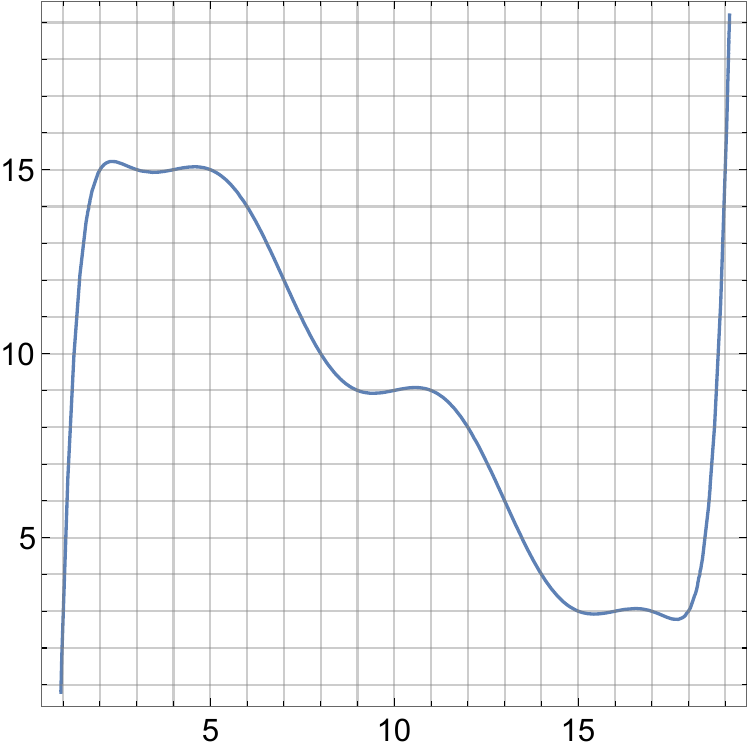}
    \caption{The graphs of $r_{12}(x)$ and $r_{13}(x)$}
    \label{fig examples}
\end{figure}

\subsection{Explicit formulas for \texorpdfstring{$s_d(x)$}{sd(x)}}
We suspect there is more of interest to say about the dynamical properties of the sequence of polynomials $r_d(x)$.
Thus to facilitate their future study we end this section by deriving explicit formulas for the polynomials $s_d(x)$, which then allow for direct calculation of $r_d(x)$.

For $d\geq 0$, let $c_d(x) \in \QQ[x]$ be the polynomial sequence defined by
\[
    c_{2k}(x) := \frac{1}{(2k)!}\prod_{j=1}^k \left(x^2 - \tfrac{(2j-1)^2}{4}\right)
    \hspace{.75in}
    c_{2k+1}(x) := \frac{1}{(2k+1)!}x\prod_{j=1}^k (x^2 - j^2).
\]
Note that, by construction, $c_d(x)$ is even when $d$ is even and $c_d(x)$ is odd when $d$ is odd.
A straightforward comparison of roots and leading coefficients implies that
\[
    \delta c_{d+1}(x) = c_{d}(x)
\]
for $d\geq 0$, and $\delta c_0(x) = \delta\,1 = 0$.
Furthermore, $c_{2k}(x)$ is integer-valued on $\ZZ + \tfrac{1}{2}$ and $c_{2k+1}(x)$ is integer-valued on $\ZZ$.

\begin{prop}
For all $k\geq 0$,
\[
    s_{2k}(x) := \sum_{j=0}^k (-1)^{k-j} c_{2j}(x)
    \qquad\text{and}\qquad
    s_{2k+1}(x) := \sum_{j=0}^k (-1)^{k-j} c_{2j + 1}(x).
\]
\end{prop}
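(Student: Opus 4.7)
The plan is to prove both identities simultaneously by strong induction on $d$, exploiting the recursion $\delta s_{d+1}(x) = s_d(x)$ from Lemma \ref{lem sd props}(2), the parity from Lemma \ref{lem sd props}(1), and the analogous recursion $\delta c_{d+1}(x) = c_d(x)$ (together with $\delta c_0(x) = 0$) that is noted just before the statement. Write $T_d(x)$ for the right-hand side of the proposed formula of parity matching $d$. The base cases are immediate: $T_0(x) = c_0(x) = 1 = s_0(x)$, and $T_1(x) = c_1(x) = x = s_1(x)$ (since $s_1$ has degree $1$, is odd, and satisfies $\delta s_1 = s_0 = 1$).

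For the odd inductive step ($d = 2k+1$ with $k \geq 1$), I would apply $\delta$ term by term to $T_{2k+1}(x)$. Using $\delta c_{2j+1} = c_{2j}$, the sum rearranges into $T_{2k}(x)$, which by the inductive hypothesis equals $s_{2k}(x) = \delta s_{2k+1}(x)$. Hence $T_{2k+1} - s_{2k+1}$ lies in the kernel of $\delta$ in $\QQ[x]$, i.e., it is a constant. But both polynomials are odd (as sums of odd $c_{2j+1}$), so this constant must vanish and $T_{2k+1} = s_{2k+1}$.

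For the even inductive step ($d = 2k$ with $k \geq 1$), the same device gives $\delta T_{2k}(x) = T_{2k-1}(x)$ after the $j=0$ term drops out (since $\delta c_0 = 0$) and the index is shifted, and by induction this equals $s_{2k-1}(x) = \delta s_{2k}(x)$. Thus $T_{2k} - s_{2k}$ is again a constant, but both polynomials are even, so parity alone cannot pin it down. The key trick is to evaluate at $x = \tfrac{1}{2}$: for every $j \geq 1$ the factor $x^2 - \tfrac{1}{4}$ appearing in the definition of $c_{2j}(x)$ vanishes at $x = \tfrac{1}{2}$, so $T_{2k}(\tfrac{1}{2}) = (-1)^k c_0(\tfrac{1}{2}) = (-1)^k$. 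On the other hand, since $\tfrac{1}{2} = (k+1) - \tfrac{2k+1}{2}$, we have $s_{2k}(\tfrac{1}{2}) = \rho(k+1, 2k)$, and applying properties (i), (ii) of Lemma \ref{lem rho props} together with the initial values in Table \ref{tab:init vals} reduces this to $\rho(3k+1, 0)$, which is $1$ when $k$ is even and $-1$ when $k$ is odd. Matching values forces the constant to be zero.

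The main obstacle is isolating and evaluating the additive constant in the even case; everything else is routine bookkeeping once one notices that $x = \tfrac{1}{2}$ is the natural test point, being a root of every $c_{2j}$ with $j \geq 1$.
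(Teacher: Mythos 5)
Your proof is correct, and it takes a related but organizationally different route from the paper's. The paper expands $s_{2k}$ in the basis $\{c_{2j}\}_{j\geq 0}$ of even polynomials and extracts each coefficient $\alpha_j$ in one stroke as $\alpha_j = \delta^{2j} s_{2k}(\tfrac12) = s_{2(k-j)}(\tfrac12) = (-1)^{k-j}$, then gets the odd case for free by applying $\delta$. You instead run an induction on degree, use $\delta T_d = T_{d-1} = s_{d-1} = \delta s_d$ to reduce $T_d - s_d$ to a constant, and pin that constant by parity in the odd case and by evaluating at $x = \tfrac12$ in the even case. Both arguments rest on the same two engines --- the recursion $\delta c_{d+1} = c_d$ (with $\delta c_0 = 0$) and the vanishing $c_{2j}(\tfrac12) = 0$ for $j \ge 1$ together with the $\rho$-computation of $s_{\text{even}}(\tfrac12)$ --- so the computational core is identical. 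What your induction buys is that you never need to observe that $\{c_{2j}\}$ spans the even polynomials; what it costs is a separate base case $d=1$ and a parity argument for the odd step. One economy worth noting: once the even identity is in hand, the odd one follows immediately by applying $\delta$ to $T_{2k+2} = s_{2k+2}$ (this is exactly how the paper dispatches it), which would let you drop the odd inductive step and the $d=1$ base case entirely.
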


\begin{proof}
First observe that $\{c_{2k} : k\geq 0\}$ forms a basis for the vector space 
of even polynomials in $\QQ[x]$. Therefore there are rational numbers $\alpha_j$ such that
\[
    s_{2k}(x) := \sum_{j=0}^k \alpha_j c_{2j}(x).
\]
Since 
\[
    c_{2k}(\tfrac{1}{2}) = \begin{cases} 1 & \text{if $k=0$,}\\ 0 & \text{otherwise,}\end{cases}
\]
it follows from Lemma \ref{lem rho props} and Lemma \ref{lem sd props} that 
\[
    \alpha_j = \delta^{2j} s_{2k}(\tfrac{1}{2}) 
    = s_{2(k-j)}(\tfrac{1}{2})
    = \rho(k-j,2(k-j))
    = \rho(3(k-j),0)
    = (-1)^{k-j}\rho(0,0)
    = (-1)^{k-j}.
\]
The expansion for $s_{2k+1}(x)$ follows by applying $\delta$ to the expansion of $s_{2k+2}(x)$.
\end{proof}

\section{Low degree examples}
\label{sec low degree}

In this final section we provide lower bounds for $B_d$ and $C_d$ in low degrees found by computation.
Table \ref{table low deg} gives examples of polynomials $f(x)$ and integers $m, n$ such that
\begin{equation}
\label{eqn compress}
    f([m]) \subseteq [n].
\end{equation}
These are the polynomials with the largest $m$ for each degree $d$ that we found by computer search.
Thus the entries in the column labeled $m$ also provide the best known lower bounds on $B_d$ for $2\leq d \leq 9$.

\begin{table}[H]
\begin{center}
\begin{tabular}{|c|c|c|c|}
\hline
$d$ & $m$ & $n$ & $f(x)$\\
\hline
2 & 8 & 7 & ${\scriptstyle\frac{x^2 -9x + 22}{2}}$\\
3 & 11 & 11 & ${\scriptstyle\frac{x^3 - 18x^2 + 89x - 66}{6}}$\\
4 & 10 & 8 & ${\scriptstyle\frac{x^4 - 22x^3 + 167x^2 - 506x + 552}{24}}$\\
5 & 13 & 9 & ${\scriptstyle\frac{x^5 - 35x^4 + 445x^3 - 2485x^2 + 5794x - 3600}{120}}$\\
6 & 14 & 10 & ${\scriptstyle\frac{x^6 - 45x^5 + 775x^4 - 6375x^3 + 25504x^2 - 45060x + 30960}{720}}$\\
7 & 15 & 15 & ${\scriptstyle\frac{x^7 - 56x^6 + 1246x^5 - 14000x^4 + 83629x^3 - 258104x^2 + 373764x - 151200}{5040}}$\\
8 & 16 & 16 & ${\scriptstyle\frac{x^8 - 68x^7 + 1918x^6 - 29036x^5 + 254989x^4 - 1309952x^3 + 3765012x^2 - 5343984x + 2862720}{20160}}$\\
8 & 16 & 15 & ${\scriptstyle\frac{x^8 - 68x^7 + 1946x^6 - 30464x^5 + 282569x^4 - 1559852x^3 + 4836124x^2 - 7320336x + 4273920}{40320}}$\\
9 & 19  & 17  & ${\scriptstyle\frac{x^9 - 90x^8 + 3426x^7 - 71820x^6 + 904449x^5 - 7002450x^4 + 32752124x^3 - 87183720x^2 + 116300160x - 55520640}{181440}}$\\
\hline
\end{tabular}
\end{center}
\caption{}
\label{table low deg}
\end{table}

One fast method to find these examples in low degree is to use the LLL basis reduction algorithm to find short vectors in the lattice $\Lambda_{d,e}$ defined in Section~\ref{sec lattices}.
Using this method we surveyed up to degree $d = 400$ and found examples giving $B_d \geq d + 8$ for all $11 \leq d \leq 283$ except for 
\[
     d \in \{21,
 219,
 221,
 235,
 237,
 241,
 244,\ldots, 247,
 249,
 251,
 255,\ldots, 266,
 268,
 269,
 271\}.
\]
For all other degrees $11 \leq d \leq 400$ the examples showed that $B_d \geq d + 6$, which we already knew by Theorem \ref{thm d+6}.

When $n < m$ in Table \ref{table low deg}, Proposition \ref{prop common preper} applied to $f([m]) \subseteq [m-1]$ allows us to extract lower bounds on $C_d$.
Note that the lower bound on $C_2$ in Table \ref{table cd bounds} comes from Example \ref{ex quadratic 2}.

In Table \ref{table cd bounds} we collect the best lower bounds on $C_d$ for small $d$ that we found through computational experiment. All of our examples came from common preperiodic points of $f(x)$ and $f(x) + 1$ for some polynomial $f(x)$ exhibiting dynamical compression.

\begin{table}[H]
    \centering
    \begin{tabular}{|c|cccccccccccccc|}
        \hline
        $d$ & 2 & 3 & 4 & 5 & 6 & 7 & 8 & 9 & 10 & 11 & 12 & 13 & 14 & 15\\\hline
        $C_d \ge $ & 26 & 27 & 40 & 60 & 78 & 84 & 120 & 162 & 190 & 198 & 228 & 260 & 294 & 330\\
        \hline
    \end{tabular}
    \caption{}
    \label{table cd bounds}
\end{table}

For degrees $d = 2, 4, 5, 6, 8, 9$, the polynomials giving the lower bound on $C_d$ come from Table \ref{table low deg}. For degrees $d = 3, 7$, the polynomials $r_d(x)$ constructed in Section \ref{sec construct rd} give the lower bound. For $10 \leq d \leq 15$, the polynomials $f_d(x)$ giving the lower bounds on $C_d$ are too large to print explicitly. Instead, Table \ref{table interp vals} lists $d + 1$ interpolating values $(f_d(1), f_d(2), \ldots, f_d(d+1))$ which uniquely determine the polynomial $f_d(x)$.

\begin{table}[H]
    \centering
    \begin{tabular}{|c|l|}
        \hline
        $d$ & $(f_d(1), f_d(2),\ldots, f_d(d+1))$\\
        \hline
        10 & (14, 6, 14, 6, 1, 6, 14, 17, 14, 10, 10)\\
        11 & (17, 1, 15, 3, 4, 14, 17, 12, 8, 9, 10, 6)\\
        12 & (17, 1, 17, 3, 4, 14, 17, 12, 6, 3, 3, 6, 12)\\
        13 & (17, 1, 17, 1, 3, 13, 16, 13, 10, 9, 9, 9, 8, 5)\\
        14 & (20, 4, 20, 4, 20, 14, 1, 8, 21, 18, 6, 6, 18, 21, 8)\\
        15 & (21, 1, 2, 20, 4, 1, 9, 10, 5, 3, 6, 11, 16, 19, 17, 12)\\
        \hline
    \end{tabular}
    \caption{}
    \label{table interp vals}
\end{table}

\end{document}